\let\ps@IEEEtitlepagestyle\ps@fancy
\newtheorem{theorem}{Theorem}[section]
\newtheorem{proposition}[theorem]{Proposition}
\newtheorem{conjecture}[theorem]{Conjecture}
\newtheorem{corollary}[theorem]{Corollary}
\newtheorem{definition}[theorem]{Definition}
\newtheorem{example}[theorem]{Example}
\newtheorem{open}[theorem]{Open question}
\begin{document}

\title{Network Communication with operators in Dedekind Finite and Stably Finite Rings}
%\author{Søren Riis}
%\affil{Centre of Discrete Mathematics}
% and School of Electronic Engineering \& Computer Science
%Queen Mary University of London
%London E1 4NS UK}

\author{\IEEEauthorblockN{Søren Riis \thanks{Email: s.riis@qmul.ac.uk} }

\IEEEauthorblockA{\normalsize Centre of Discrete Mathematics \\ School of Electronic Engineering \& Computer Science \\
Queen Mary University of London} \\  {\today}}
%\footnote{Email: s.riis@qmul.ac.uk}
%\maketitle

%
%\author{S\o ren Riis \footnote{School of Electronic Engineering and Computer Science, Queen Mary, University of London, UK. \texttt{s.riis@qmul.ac.uk}}}

%\date{\today\footnote{This work is partially supported by CNRS and Royal Society through the International Exchanges Scheme grant {\em Boolean networks, network coding and memoryless computation.}}}

%\thanks{School of Electronic Engineering and Computer Science,
%Queen Mary, University of London, London, E1 4NS, U.K. Email:
%smriis@dcs.qmul.ac.uk}

%\date{today}

\maketitle

\begin{abstract}
Messages in communication networks often are considered as "discrete" taking values in some finite alphabet (e.g. a finite field). However, if we want to consider for example communication based on analogue signals, we will have to consider messages that might be functions selected from an infinite function space. In this paper, we extend linear network coding over finite/discrete alphabets/message space to the infinite/continuous case. The key to our approach is to view the space of operators that acts linearly on a space of signals as a module over a ring.

It turns out that modules over many rings $R$ leads to unrealistic network models where communication channels have unlimited capacity.  We show that a natural condition to avoid this is equivalent to the ring $R$ being {\em Dedekind finite} (or {\em Neumann finite}) i.e. each element in $R$ has a  left inverse if and only if it has a right inverse.
We then consider a strengthened capacity condition and show that this requirement precisely corresponds to the class of (faithful) modules over {\em stably finite rings} (or {\em weakly finite}).

The introduced framework makes it possible to compare the performance of digital and analogue techniques. It turns out that within our model, digital and analogue communication outperforms each other in different situations. More specifically we construct: 1) A communications network where {\em digital communication outperforms analogue communication}
2) A communication network where {\em analogue communication outperforms digital communication}. 

The performance of a communication network is in the finite case usually measured in terms  band width (or capacity). We show this notion also remains valid for finite dimensional matrix rings which make it possible (in principle) to establish gain of digital versus analogue (analogue versus digital) communications. 

\end{abstract}
\section{Introduction - general considerations}
Control theory is a branch of engineering and mathematics that deals with the operation of dynamical
systems. 
The idea is that a controller manipulates the
inputs to obtain the desired effect of the output of the system.
The task is to ensure that one or more output variables have a particular behaviour when time progress.

Dynamic systems are often given by a graph with input and output nodes. This idea was presented in \cite{Hanne}. The graph
might be cyclic containing a complicated structure of `feedback loops'.
On the edges are attached operators that modify the signals according to certain rules and transformations.

Control theory and dynamical systems have been intensively studied for
more than 70 years  \cite{his1, his2}. Certain types of dynamic systems date back to antiquity.

Mathematical theories of information flows in networks were developed even before modern information technology was developed. Some of the ideas used in designing
Arpanet (the precursor of the internet) was for example based on queuing theory
\cite{Apranet3}.

A crucial design idea for arplanet and the internet is called "packet-switching" \cite{Apranet1, Apranet3}). The idea is that messages are broken into a discrete number of pieces (packets). These
pieces, each of which contains information on where it is supposed to go,
would then be sent out through a network and once  arriving at their destination they would be reassembled into the original message.

In modern information systems packet-switching still plays a central, crucial role. A packet of information behaves primarily as a car in a traffic system. To get from A to B, the packet of information follows a particular path. During transmission, packets are not modified or mixed
with other messages.

Contrary to this approach, in (typical) dynamic control systems signals and influences are
spread out and travels along many distinct paths and might even appear in complicated feedback structures.
Thus, we cannot realistically expect a (non-trivial) link between regular routing (packet switching) and (standard) control theory.

Recently a new approach to communication networks has been introduced. As part of multiuser information theory {\em network coding} is concerned with organising and planning information flows in communication networks. However in network coding
there is {\em no} a priori assumption that each message has to follow just one path, or that
messages cannot be modified and combined during transmission
\cite{LIN,Satellite,BR, RiisAlswede, Riis04}.
Intuitively, the behaviour of messages in network coding resembles the action of signals in dynamic systems in control theory, and in this paper I will initiate an investigation into this potential overlap of research areas.

We can consider the senders in a communication network as controllers - in the sense of control theory.  The output of the system is the messages/signals received at their destinations. This makes it possible to consider a typical communication problem as particular type of control problem.

Control theory typically deals with signals taking values in infinite "continuous" structures (e.g. complex-valued functions on some space), while messages in communication networks often are considered as "discrete" taking values in finite "discrete" alphabets/sets (e.g. finite fields).  In this paper, I will show how the finite/discrete and infinite/continuous in the linear case naturally can be unified into one general theory.

The basic idea in network coding can be explained by considering two ground stations that are communicating via a satellite (see figure \ref{fig01})

\begin{figure}[H]
\centering
\begin{tikzpicture}[descr/.style={fill=white,inner sep=2.5pt}]
  \matrix (m) [matrix of math nodes,row sep=3em,column sep=4em,minimum width=2em] {
     &  {\rm Satellite} & \\
     &  & \\
     {\rm W} & & {\rm  E} \\
      };
      \path[-stealth]
(m-3-1) edge [bend left=15] node [descr] {$x$} (m-1-2) 
(m-3-3) edge [bend right=15] node [descr] {$y$} (m-1-2)
(m-1-2) edge [bend left=15] node [descr] {$x \oplus y$} (m-3-1)
(m-1-2) edge [bend right=15] node [descr] {$x \oplus y$} (m-3-3);
\end{tikzpicture}
\caption{Satellite communication problem}
\label{fig01}
\end{figure}
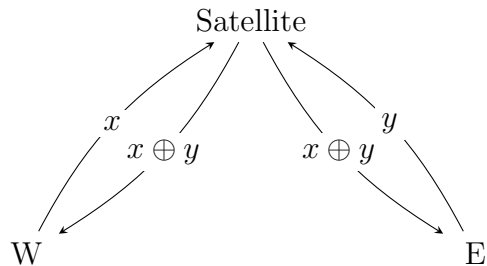

\begin{example} \label{exam01}
Suppose that {\em W} (for west) want to send a bit $x \in \{0,1\}$ to {\em E} (for east). Further, suppose that {\em E} wants to send a bit $y \in \{0,1\}$ to {\em W}.  The satellite receives the two bits $(x,y) \in \{0,1\} \times \{0,1\}$. In traditional packet routing the satellite would have to beam back to earth both the bit $x$ and the bit $y$ i.e. the satellite would have to transmit two bits. Using network coding we can reduce this to a single bit by letting the satellite transmit the exclusive OR $x \oplus y \in \{0,1\}$.  {\em W} can calculate $y=(x\oplus y) \oplus x$ and {\em E} can calculate $x=(x \oplus y) \oplus y$. \hfill $\clubsuit$
\end{example}

\bigskip

\bigskip

\noindent
We want to include to our approach the case where the space of messages is {\em infinite}. We do this by considering the space of messages as a set $M$ (commutative group) where a space of operators $R$ acts in a linear fashion. More specifically, let $\tilde{M}=(R,M)$ be a module over a ring $R$. The messages are the elements in the module (i.e. the elements in $M$) while the space of operators that acts on the signals form the ring $R$.

\begin{example} \label{exam03}
Assume that {\em W} wants to send an element $m_{{\rm W}} \in M$ to {\em E}, that simultaneously wants to send an element $m_{{\rm E}} \in M$ to {\em W}. The satellite receives the messages $m_{{\rm W}}$ and $m_{{\rm E}}$ and transmit a message $m=r_{{\rm W}} m_{{\rm W}}+
r_{{\rm E}} m_{{\rm E}}$ where $r_{{\rm W}},r_{{\rm E}} \in R$.  Assume that $r_{{\rm E}}$ has a left inverse $r_{{\rm E}}^{-1,{\rm left}}$  in $R$.
In this case {\em W} can calculate 
\[m_{{\rm E}}=r_{{\rm E}}^{-1,{\rm left}}(r_{{\rm W}} m_{{\rm W}}+r_{{\rm E}} m_{{\rm E}})-r_{{\rm E}}^{-1,{\rm left}}r_{{\rm W}}m_{{\rm W}}\]
If $r_{{\rm W}}$ has a left inverse  $r_{{\rm W}}^{-1,{\rm left}}$ station {\em E} can compute
\[m_{{\rm W}}=r_{{\rm W}}^{-1,{\rm left}}(r_{{\rm W}} m_{{\rm W}}+r_{{\rm E}} m_{{\rm E}})-r_{{\rm W}}^{-1,{\rm left}}r_{{\rm E}} m_{{\rm E}}\]
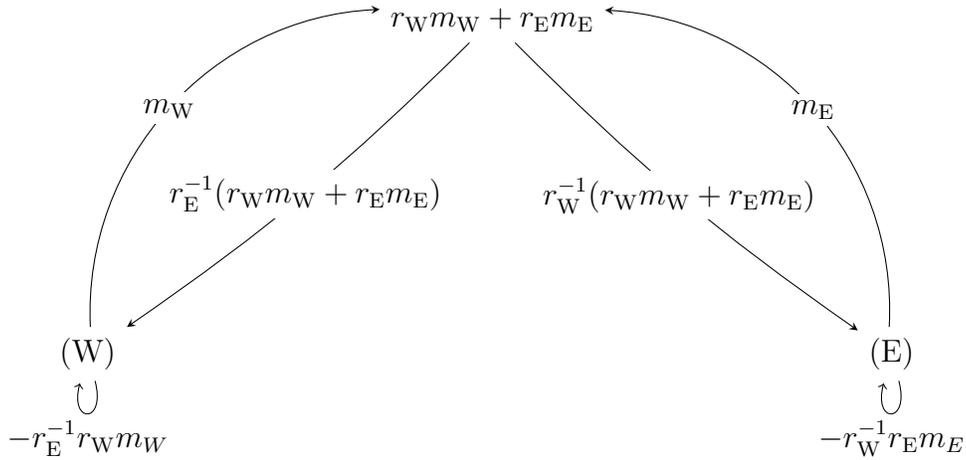
\begin{figure}[H] \label{exp}
\centering
\begin{tikzpicture}[descr/.style={fill=white,inner sep=2.5pt}]
  \matrix (m) [matrix of math nodes,row sep=3em,column sep=4em,minimum width=2em] {
     & &  r_{{\rm W}}m_{{\rm W}}+r_{{\rm E}} m_{{\rm E}}  & & \\
   & &  &  & \\
     & & & & \\
     {\rm (W)} & &  &  & {\rm (E)} \\
      };
      \path[-stealth]
(m-4-1) edge [bend left=45] node [descr] {$m_{{\rm W}}$} (m-1-3) 
(m-4-5) edge [bend right=45] node [descr] {$m_{{\rm E}}$} (m-1-3)
(m-1-3) edge [bend left=5] node [descr] {$r_{{\rm E}}^{-1}(r_{{\rm W}}m_{{\rm W}}+r_{{\rm E}}m_{{\rm E}})$} (m-4-1)
(m-1-3) edge [bend right=5] node [descr] {$r_{{\rm W}}^{-1}(r_{{\rm W}}m_{{\rm W}}+r_{{\rm E}}m_{{\rm E}})$} (m-4-5)
(m-4-1) edge [loop below] node [descr]{$-r_{{\rm E}}^{-1}r_{{\rm W}}m_W$} (m-4-1)
(m-4-5) edge [loop below] node [descr]{$-r_{{\rm W}}^{-1}r_{{\rm E}}m_E$} (m-4-5);

\end{tikzpicture}
\caption{Satellite communication problem: solution over a module $\tilde{M}=(R,M)$}
\label{fig01B}
\end{figure}

\hfill $\clubsuit$
\end{example}

\noindent
We can represent the satellite problem in figure \ref{exp} as the so-called butterfly network in figure \ref{fig02}

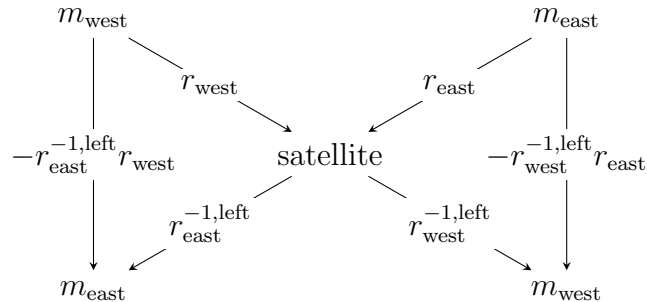
\begin{figure}[H]
\centering 

\begin{tikzpicture}[descr/.style={fill=white,inner sep=2.5pt}] 
  \matrix (m) [matrix of math nodes,row sep=3em,column sep=4em,minimum width=2em] {
     m_{{\rm west}} & &  m_{{\rm east}} \\
        &  {\rm satellite}  &  \\
     m_{{\rm  east}} &  &   m_{{\rm west}} \\};
  \path[-stealth]
(m-1-1) edge node [descr] {$r_{{\rm west}}$} (m-2-2)
(m-2-2) edge node [descr] {$r^{-1,{\rm left}}_{{\rm east}}$} (m-3-1)  
(m-1-3) edge node [descr] {$r_{{\rm east}}$} (m-2-2)
(m-2-2) edge node [descr] {$r^{-1,{\rm left}}_{{\rm west}}$}(m-3-3)
(m-1-1) edge node [descr] {$-r^{-1,{\rm left}}_{{\rm east}}r_{{\rm west}}$} (m-3-1)
(m-1-3) edge node [descr] {$-r^{-1,{\rm left}}_{{\rm west}}r_{{\rm east}}$} (m-3-3);       
\end{tikzpicture}
\caption{Butterfly network}
 \label{fig02}
\end{figure}

Notice that the sum of the products along the (two) paths from the node in the top left corner to the lower left corner is \[(r_{{\rm E}}^{-1,{\rm left}}r_{{\rm W}}- r_{{\rm E}}^{-1,{\rm left}}r_{{\rm W}})m_{{\rm W}}=0\]
and the product of the path from the upper right node to the lower left node is \[(r_{{\rm E}}^{-1,{\rm left}}r_{{\rm E}})m_{{\rm E}}=m_{{\rm E}}\]
The contribution of a node $n$ is then given by the sum of the products of all paths from the source nodes to $n$ \footnote{An elaboration of this idea that reassembles the Feynman integral can be found in \cite{reversible}}

We will use this point of calculation later when we analyse solutions to more complicated communication networks.  In the actual case, we can calculate the message received by the lower left node by summing the products along each path from the source nodes (upper left and upper right nodes) to the lower left node.Explicitly this sum of products is
\[(r_{{\rm E}}^{-1,{\rm left}}r_{{\rm W}}- r_{{\rm E}}^{-1,{\rm left}}r_{{\rm W}})m_{{\rm W}}+(r_{{\rm E}}^{-1,{\rm left}}r_{{\rm E}})m_{{\rm E}}= (0+1) m_{{\rm E}} = m_{{\rm E}} \]
which shows that the lower left node receives message $m_{{\rm E}}$ as required. A similar computation shows that the lower right node receives message $m_{{\rm W}}$ as required. 

\bigskip

\noindent
Now consider the communication network in figure \ref{fig2}

\begin{figure}[H]
\centering
\begin{tikzpicture}[descr/.style={fill=white,inner sep=2.5pt}] 
  \matrix (m) [matrix of math nodes,row sep=3em,column sep=4em,minimum width=2em] {
     m_1 & &  m_2 \\
        &  {\rm satellite}  &  \\
     m_1  &  &   m_2 \\};
  \path[-stealth]
(m-1-1) edge node [descr] {$r_1$} (m-2-2)
(m-2-2) edge node [descr] {$d_1$} (m-3-1)  
(m-1-3) edge node [descr] {$r_2$} (m-2-2)
(m-2-2) edge node [descr] {$d_2$}(m-3-3) ;       
\end{tikzpicture}
\caption{Wingless butterfly network}
\label{fig2}
\end{figure}
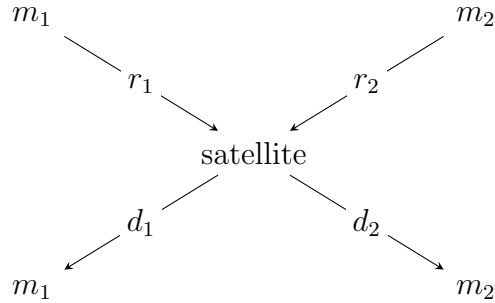

This is a network similar to the butterfly network but without the wings. The network corresponds to the case with two senders, two receivers and a satellite. Sender $1$ wants to send a message $m_1 \in M$ via the satellite to his friend receiver $1$. And  sender $2$ wants to send a message $m_2 \in M$ via the satellite to his friend receiver $2$. Can this communication be solved, so the satellite just broadcasting one message?  Intuitively, {\em in a realistic physical model it should not be possible to solve this problem as it ought to be impossible encode two messages as one}.

Now if the space of messages is infinite, it's easy to provide linear operators that solve the wingless butterfly communication problem,

\begin{example}
Let $\tilde{M}=(k,V)$ be a $k$-module where $V$ be in infinite dimension vector space over the field $k$ such that there is a $k$-isomorphism $\theta : V \rightarrow V \oplus V$.  Define projections $p_1,p_2: V \oplus V \rightarrow V$ by $p_1(v_1,v_2) :=v_1$ and $p_2(v_1,v_2):=v_2$ and define inclusions $i_1,i_2: V \rightarrow  V \oplus V$ by $i_1(v)=(v,0) \in V \oplus V$ and $i_2(v)=(0,v) \in V \oplus V$.  

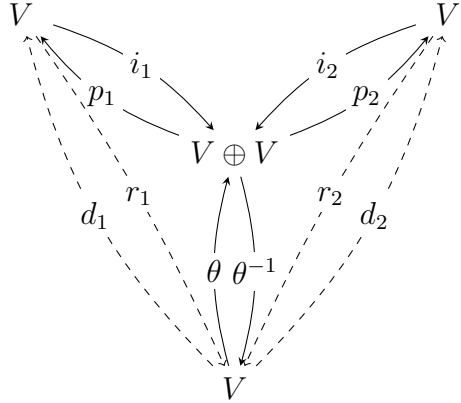
\begin{figure}[H]
\centering
\begin{tikzpicture}[descr/.style={fill=white,inner sep=2.5pt}]
  \matrix (m) [matrix of math nodes,row sep=3em,column sep=4em,minimum width=2em] {
     V & & V \\
     & V \oplus V &  \\
     & & & \\
     &       V     & \\};
  \path[-stealth]
(m-1-1) edge [bend left=15] node [descr] {$i_1$} (m-2-2)
(m-1-3) edge [bend right=15] node [descr] {$i_2$} (m-2-2)  
(m-2-2) edge [bend left=15] node [descr] {$p_1$} (m-1-1)        
(m-2-2) edge [bend right=15] node [descr] {$p_2$} (m-1-3)
(m-2-2) edge [bend left=15] node [descr] {$\theta^{-1}$}(m-4-2)  
(m-4-2) edge [bend left=15] node [descr] {$\theta$}(m-2-2);  
\path[dashed,->] 
(m-4-2) edge [bend left=15] node [descr] {$d_1$}(m-1-1)
(m-1-1) edge [bend left=5] node [descr] {$r_1$}(m-4-2)
(m-4-2) edge [bend right=15] node [descr] {$d_2$}(m-1-3)
(m-1-3) edge [bend right=5] node [descr] {$r_2$}(m-4-2);
\end{tikzpicture}
\caption{Commutative diagram}
\label{comdia}
\end{figure}

Consider the commutative diagram in figure \ref{comdia}.
The $k$-homomorphisms $r_1,r_2, d_1$ and $d_2$ on the dashed lines are uniquely determined by
$d_1:=p_1\theta, \quad d_2:=p_2 \theta, \quad r_1:=\theta^{-1}i_1, \quad r_2:=\theta^{-1}i_2$.

The communication problem in figure \ref{fig2} can now be solved by letting the satellite broadcast the message \[m_{{\rm satellite}}:=\theta^{-1}i_1v_1+\theta^{-1}i_2v_2\] where
$v_1:=m_1$, $v_2:=m_2$, and thus $m_1=p_1\theta m_{{\rm satellite}}$
and $m_2=p_2\theta m_{{\rm satellite}}$. 
\hfill $\clubsuit$
\end{example}

\bigskip

\noindent
This example illustrate that:  {\em The issue whether operators (like $r_1,r_2,d_1$ and $d_2$) should be considered realistic from a physical point of view (they are not) is determined by properties of the ring they generate.} 

The wingless butterfly communication problem can be expressed in mathematical terms as follows:  The satellite receives the messages $m_1 \in M$ and $m_2 \in M$ and broadcast a message of the form $m=r_1m_1+r_2m_2 \in M$.  For this communication to work there need to be ring elements $d_1,d_2 \in R$ that the receivers can use for decoding.  More specifically the communication problem has a solution over the module $\tilde{M}=(R,M)$ if and only 
\[(*) \quad \exists r_1,r_2 ,d_1,d_2 \in R \ \forall m_1,m_2 \in M: \quad m_1=d_1(r_1m_1+r_2m_2)  \quad {\rm and} \quad m_2=d_2(r_1m_1+r_2m_2)\]

We will often assume that $R$ acts {\em faithfully} on $M$ i.e. {\em if an operator behaves like the identity, it is the identity} i.e. $\forall r \in R: \ (\forall m \in M  \ rm=m) \rightarrow r=1$. This is logically equivalent to the the statement that {\em two operators are identical if on only if the behave the same way} i.e. $\forall r,s \in R: \  (\forall m \in M \ rm=sm) \rightarrow r=s$. This in turn is equivalent to the most common definition of $R$ acting faithfully: $\forall r \in R \setminus \{0\} \ \exists m \in M: rm \neq 0$. 

If $R$ acts faithfully on $M$ (*) is equivalent to
\[(**) \quad \exists r_1,r_2 ,d_1,d_2 \in R: \quad  d_1r_1=d_2r_2=1 \quad {\rm and} \quad d_1r_2=d_2r_1=0\]

For many rings (**) is valid,  but as we just noted in a {\em realistic physical model} it should not be possible to encode two messages as one.  From a mathematic point of view this is of course unproblematic to consider modules over rings $R$ that satisfy (**), however we want to formalise the idea that each channel has bounded finite capacity rather than unbounded capacity. We take the view that a module $\tilde{M}=(R,M)$ over a ring $R$ that satisfies (**) cannot be implemented by any realistic physical system.

\section{Dedekind finite rings and the capacity condition}

\subsection{Dedekind finite rings}

A ring $R$ (with $1$-element) is {\em Dedekind finite} (or von Neumann finite, or directly finite) if \[\forall x,y \in R : xy=1 \rightarrow yx=1\]
i.e. all one-sided inverses in R are two-sided \cite{exercisesRings}

Many classes of rings are Dedekind finite. Commutative rings, finite rings, and the matrix rings $M_n(F)$ are Dedekind finite.  Domains are Dedekind finite and so are left as well as right Noetherian rings.  For a field $k$ any finitely dimensional $k$-algebra is Dedekind finite. And for any group $G$ the group algebra $kG$ is Dedekind finite.
Rings with only finitely many nilpotent elements (i.e. elements $x$ where $x^n=0$ for some $n \in \{1,2,...\}$) are Dedekind finite.  Reversible rings i.e. rings where $\forall x,y \ (xy=0 \rightarrow yx=0)$ are Dedekind finite.  Any direct product of Dedekind finite rings is Dedekind finite.

\begin{example}
Let $V$ be a vector space and let $R$ be the ring of linear operators acting on $V$.  A linear operator $T \in R$ is injective (i.e. 1-1) if and only if it has a left-inverse, and it is surjective (i.e. onto) if and only if it has a right-inverse. More concretely, assume we try to solve an operator equation $Tx=v$ for some $v \in V$.
If $T$ has a left inverse $T^{-1,{\rm left}}$ we know that $Tx=v$ has  either no or one solution. Furthermore, if the equation has a solution, it is given by $T^{-1,{\rm left}}v$. If $T$ has a right inverse $T^{-1,{\rm right}}$ we know that $T^{-1,{\rm right}}v$ is a solution, but the equation $Tx=v$ might have other solutions.  
Dedekind finite rings are rings $R$ of operators where surjective, injective or bijective are equivalent properties. 
\hfill $\clubsuit$
\end{example}

There are various ways to express that a ring $R$ is Dedekind finite. The following list is not exhaustive but is sufficient for our purpose. 

\begin{theorem} \label{equiv}

Let $R$ be a ring with $1$ element. Then the following statements are equivalent:

\begin{enumerate}

\item \label{eq1}
$R$ is Dedekind finite i.e. $\forall x,y \in R \ ( xy=1 \rightarrow yx=1)$

\item \label{eq2}
Each element that has a right inverse has a left inverse i.e.
$\forall x \  ((\exists y \ xy=1) \rightarrow (\exists z \ zx=1))$ 

\item \label{new}
 $\forall x,y,z \in R  \ ((xy=1 \wedge xz=0) \rightarrow z=0)$ (capacity condition) 

\item  \label{eq4}
Each element has at most one right inverse i.e. $ \forall x,y,z \in R \ ((xy=1 \wedge xz=1) \rightarrow y=z)$

\item \label{eq7}
Each element that has a left inverse have a two sided inverse i.e. \\
$\forall x,y \ ((yx=1) \rightarrow (\exists z \ xz=zx=1))$

\item \label{eq3}
Each element that has a left inverse has a right inverse i.e.
$\forall x \  ((\exists y \ yx=1) \rightarrow (\exists z \ xz=1))$ 

\item \label{dual}
$\forall x,y,z \in R  \  ((yx=1 \wedge zx=0) \rightarrow z=0)$ (the dual of the capacity condition)

\item \label{eq5}
Each element has at most one left inverse i.e. $ \forall x,y,z \in R \ ((yx=1 \wedge zx=1) \rightarrow y=z)$

\item \label{eq6}
Each element that has a right inverse have a two sided inverse i.e. \\ $\forall x,y \ ((xy=1) \rightarrow (\exists z \ xz=zx=1))$

\end{enumerate}
\end{theorem}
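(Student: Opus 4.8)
The plan is to prove that all nine conditions are equivalent to the first, \emph{$R$ is Dedekind finite}, and to halve the work by exploiting left--right duality. Observe that conditions (6), (7), (8), (9) are the mirror images of (2), (3), (4), (5): each is obtained by reversing the order of every product, i.e.\ by passing from $R$ to the opposite ring $R^{\mathrm{op}}$, whereas condition (1) is self-dual (it reads the same in $R$ and in $R^{\mathrm{op}}$ after renaming the variables). Consequently it is enough to establish the equivalence of (1)--(5); applying that equivalence to $R^{\mathrm{op}}$ immediately delivers the equivalence of (1) with each of (6)--(9).

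For the equivalences among (1)--(5) I would run three short implication chains, each step being a single line of ring arithmetic. (1)~$\Rightarrow$~(2) is trivial, and (2)~$\Rightarrow$~(1): if $xy=1$ then $x$ has some left inverse $z$, and then $z=z(xy)=(zx)y=y$, so $yx=zx=1$. Next, (1)~$\Rightarrow$~(5) follows by applying (1) to the pair $(y,x)$, and for (5)~$\Rightarrow$~(1) one reads $xy=1$ as ``$x$ is a left inverse of $y$'', uses (5) to obtain a two-sided inverse $z$ of $y$, and notes $x=x(yz)=(xy)z=z$, hence $yx=yz=1$. Finally, (1)~$\Rightarrow$~(4): from $xy=xz=1$ and $yx=1$ (by (1)) we get $y=y(xz)=(yx)z=z$; then (4)~$\Rightarrow$~(3): if $xy=1$ and $xz=0$ then $x(y+z)=1=xy$, so uniqueness of right inverses forces $z=0$; and (3)~$\Rightarrow$~(1): from $xy=1$ put $w:=yx-1$, compute $xw=(xy)x-x=x-x=0$, and conclude $w=0$, i.e.\ $yx=1$. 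Together these chains show (1)--(5) are equivalent.

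I do not anticipate a genuine obstacle: every implication is a two- or three-term computation. The only places that call for a small idea rather than bare definition-chasing are the auxiliary element $w=yx-1$ in (3)~$\Rightarrow$~(1) (and, dually, $xy-1$ in (7)~$\Rightarrow$~(1)) and the use of $y+z$ as a competing right inverse in (4)~$\Rightarrow$~(3). What most needs care is the bookkeeping of ``left'' versus ``right'' --- keeping straight which factor plays which role when the hypotheses are invoked --- and phrasing the $R^{\mathrm{op}}$ reduction precisely enough that (6)--(9) are seen to be exactly the duals of (2)--(5).
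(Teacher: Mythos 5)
Your proposal is correct and follows essentially the same strategy as the paper: a short cycle of one-line implications among the first five conditions, with the remaining four obtained by left--right duality (which you make precise via the opposite ring $R^{\mathrm{op}}$, where the paper simply appeals to ``similar but dual arguments''). The only difference is cosmetic --- you traverse the cycle through the capacity condition and uniqueness of right inverses in the opposite direction, using the auxiliary elements $yx-1$ and $y+z$ where the paper uses $1-yx+y$ and the difference of two right inverses --- but the underlying computations are the same.
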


To the best of my knowledge condition \ref{new} and condition \ref{dual} are new ways of expressing that a ring is Dedekind finite.

\noindent
{\bf Proof:} 

\noindent
(\ref{eq1}) $\implies$ (\ref{eq2}): (\ref{eq1}) can be stated as 
$\forall x \  ((\exists y \ xy=1 \rightarrow  yx=1)$ which logically implies (\ref{eq2})

\noindent
(\ref{eq2}) $\implies$ (\ref{new}): Assume $xy=1$ and $xz=0$ and assume that 
(\ref{eq2}). Then since $x$ has a right inverse $y$ it has a left inverse $w$ so $wx=1$. But then 
$0=w0=w(xz)=(wx)z=1z=z$.

\noindent
(\ref{new}) $\implies$ (\ref{eq4}):
Assume $y_1,y_2$ are right inverse of $x\in R$. Then
$xy_1=1$ and $xy_2=1$ and so $x(y_2-y_1)=0$. But then according to the 
assumption (\ref{new}) $y_2-y_1=0$ and $y_1=y_2$.

\noindent
(\ref{eq4}) $\implies$ (\ref{eq1}): Assume that $xy=1$. Consider the element $(1-yx+y)$ and notice that
$x(1-yx+y)=x-xyx+xy=xy=1$. Since we assumed that $x$ has at most one right inverse it follows that $y=(1-yx+y)$, than thus that $yx=1$.

\noindent
(\ref{eq1}) $\implies$ (\ref{eq7}): Assume that $y$ has left inverse $x$ i.e. that $xy=1$. Then according to \ref{eq1} it follows that $yx=1$ i.e. that $y$ has a two sided inverse.  

\noindent
(\ref{eq7}) $\implies$ (\ref{eq1}): Assume $xy=1$ i.e. $y$ has a left inverse. But then $y$ has a two sided inverse i.e the exist $z \in R$ s.t. $zy=yz=1$. But then $yx=yx(yz)=y(xy)z=yz=1$ i.e. $xy=yx=1$.

\bigskip

\noindent
This shows that $(\ref{eq1}) \ \Longleftrightarrow$ $(\ref{eq2}) \ \Longleftrightarrow$ $(\ref{new}) \ \Longleftrightarrow$ $(\ref{eq4}) \ \Longleftrightarrow$ (\ref{eq7}). 
The dual versions $(\ref{eq1}) \ \Longleftrightarrow$ $(\ref{eq3}) \ \Longleftrightarrow$ $(\ref{dual}) \ \Longleftrightarrow$ $(\ref{eq5}) \ \Longleftrightarrow$ (\ref{eq6}) can be shown in by simillar (but dual) arguments.

\subsection{The Capacity condition}

Consider the network in figure \ref{capacity1}. 

\begin{figure}[H]
\centering
\begin{tikzpicture}[descr/.style={fill=white,inner sep=2.5pt}]
  \matrix (m) [matrix of math nodes,row sep=3em,column sep=4em,minimum width=2em] {
     m_1 & & m_2 \\
     &  r_1m_1+r_2m_2 &  \\
     m_1 & &  \\};
  \path[-stealth]
  (m-1-1) edge node [descr] {$r_1$} (m-2-2)
  (m-1-3) edge node [descr] {$r_2=0$} (m-2-2)
  (m-2-2) edge node [descr] {$d_1$}(m-3-1);
\end{tikzpicture}
\caption{Capacity condition}
\label{capacity1}
\end{figure}
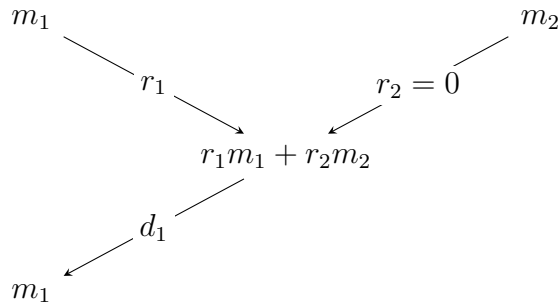

The network takes two massage $m_1$ and $m_2$ as input and the message $m_1$ can be calculated at the left lower receiver. We could have added a receiver at the lower right, but since this receiver node has no special requirement we omit it.   Intuitively the full bandwidth of the involved channels is used to transmit the message $m_1$ and thus $r_1m_1+r_2m_2$ cannot depend on $m_2$. 
In other words if $d_1$ is a decoding operator i.e. $d_1(r_1m_1+r_2m_2)=m_1$ i.e. {\em if} $d_1r_1=1$ and $d_1r_2=0$ {\em then} $r_2=0$ (see figure \ref{capacity1A} for a commutative diagram expressing the capacity condition).
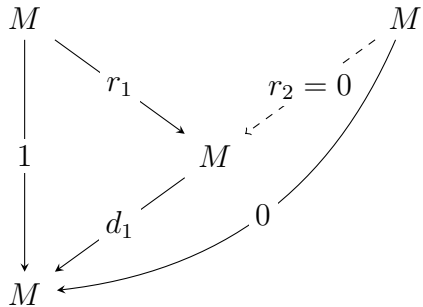
\begin{figure}[H]
\centering
\begin{tikzpicture}[descr/.style={fill=white,inner sep=2.5pt}]
  \matrix (m) [matrix of math nodes,row sep=3em,column sep=4em,minimum width=2em] {
     M & & M \\
     &  M &  \\
     M & & \\};
  \path[-stealth]
  (m-1-1) edge node [descr] {$r_1$} (m-2-2)
 
  (m-2-2) edge node [descr] {$d_1$}(m-3-1)
  (m-1-3)  edge [bend left=30] node [descr] {$0$}(m-3-1)
   (m-1-1) edge node [descr] {$1$}(m-3-1);
  \path[dashed,->] 
   (m-1-3) edge node [descr] {$r_2=0$} (m-2-2);     
                                     
\end{tikzpicture}
\caption{Commutative diagram: Capacity condition}
\label{capacity1A}
\end{figure}

This condition on $R$ can formally be expressed as: 

\begin{equation}
\forall x,y,z \in R  \ ((xy=1 \wedge xz=0) \rightarrow z=0) 
\end{equation}

We showed in the previous section that this condition is equivalent to $R$ being a Dedekind finite ring.

\begin{example}
Let  $R$ be a ring that satisfies condition (**) we considered previously i.e.
\[(**) \quad \exists r_1,r_2 ,d_1,d_2 \in R: \quad  d_1r_1=d_2r_2=1 \quad {\rm and} \quad d_1r_2=d_2r_1=0\]
Clearly $R$ is not Dedekind finite since  if $R$ were Dedekind finite $d_1r_1=1$ and $d_1r_2=0$ would imply that $r_2=0$ (by the capacity condition) which would be a contradiction since $1=d_2r_2=d_20=0$.
\hfill $\clubsuit$
\end{example}

\begin{figure}[H]
\centering
\begin{tikzpicture}[descr/.style={fill=white,inner sep=2.5pt}]
  \matrix (m) [matrix of math nodes,row sep=3em,column sep=4em,minimum width=2em] {
     m_1 & & m_2 \\
     &  r_1m_1+r_2m_2 &  \\
     m_1 & & {\rm independent \ of} \  m_1\\};
  \path[-stealth]
  (m-1-1) edge node [descr] {$r_1$} (m-2-2)
  (m-1-3) edge node [descr] {$r_2$} (m-2-2)
  (m-2-2) edge node [descr] {$d_1$}(m-3-1)
  (m-2-2) edge node [descr] {$d_2=0$}(m-3-3);
\end{tikzpicture}
\caption{Dual capacity condition}
\label{dualcapacity}
\end{figure}
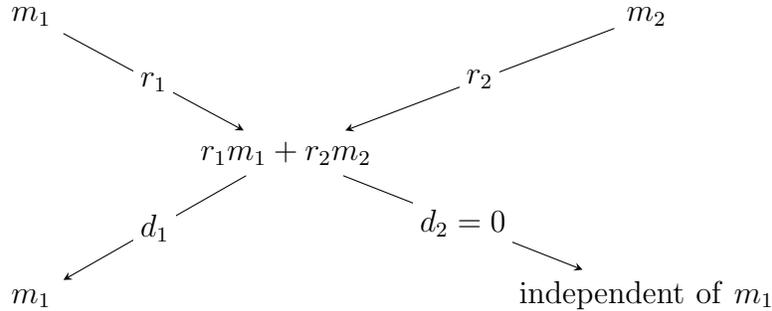

\begin{example}
Consider figure \ref{dualcapacity}. A message $m_1$ is being transmitted through a channel and being decoded such that $d_1r_1m_1=m_1$. Then intuitively $r_1m_1$ is bijective so if for some operator $d_2$, $d_2r_1m_1=0$ then $d_2=0$.  Thus if $d_1r_1=1$ and $d_2r_1=0$, then $d_2=0$ (see figure 
\ref{capacity1Adual} for a commutative diagram expressing this condition).
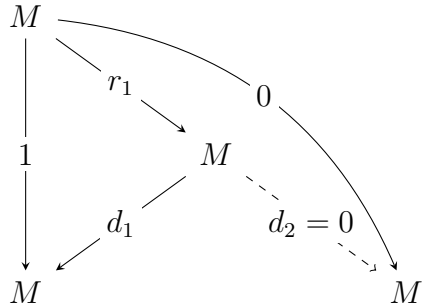
\begin{figure}[H]
\centering
\begin{tikzpicture}[descr/.style={fill=white,inner sep=2.5pt}]
  \matrix (m) [matrix of math nodes,row sep=3em,column sep=4em,minimum width=2em] {
     M & & \\
     &  M &  \\
     M & &  M\\};
  \path[-stealth]
  (m-1-1) edge node [descr] {$r_1$} (m-2-2)
  (m-2-2) edge node [descr] {$d_1$}(m-3-1)
  (m-1-1) edge [bend left=30] node [descr]{$0$}(m-3-3)
 (m-1-1) edge node [descr] {$1$}(m-3-1);
  \path[dashed,->] 
  (m-2-2) edge node [descr] {$d_2=0$}(m-3-3);                                          
\end{tikzpicture}
\caption{Commutative diagram: Dual capacity condition}
\label{capacity1Adual}
\end{figure}

This principle can be stated as: 

\begin{equation}
\forall x,y,z \in R  \  ((yx=1 \wedge zx=0) \rightarrow z=0)
\end{equation}
which is the dual capacity condition. As we showed in the previous section, this condition is also equivalent to $R$ being Dedekind finite. 
\hfill $\clubsuit$
\end{example}

\begin{example}
Polynomial identity rings (PI-rings) have been extensively investigated in the literature \cite{PIrings1,Pirings2}.  PI-rings (are isomorphic to) rings that occur by considering a commutative ring $S$ and then consider formal polynomial expressions $p \in S[x_1,x_2,...,x_n]$ in {\em non-commuting variables} that satisfies at least one identity $p(a_1,a_2,...,a_n)=0$ for all  $a_1,...,a_n \in S[x_1,....,x_n]$.  If $A$ is a PI-ring then obviously every subring of $A$ and
homomorphic image of $A$ will satisfy the identity as well. 
It can be shown that every PI ring is Dedekind finite.

The case where $p=x_1x_2-x_2x_1=0$ corresponds to commutative rings (that trivially are Dedekind finite). 

Consider the ring $A=M_2(S)$ of $2 \times 2$ matrices with coefficients in $S$. For any $a_1,a_2 \in A$, clearly ${\rm tr}(a_1a_2-a_1a_1)=0$ and thus by the Cayley-Hamilton Theorem $(a_1a_2-a_2a_1)^2=s1$ for some $s \in S$. Therefore $(a_1a_2-a_2a_1)^2$ commute with every element in $A$. Thus $A$ is a PI-ring since it satisfies the polynomial identity  $p=(x_1x_2-x_2x_1)^2x_3-x_3(x_1x_2-x_2x_1)^2=0$.  

More, generally the Amitsur-Levitzki Theorem provide a (multilinear) polynomial identity for each ring $M_k(S)$ of $k \times k$ matrices with coefficients in $S$.
 \hfill $\clubsuit$
\end{example}

\section{Finitely stable rings and the strengthened capacity condition}

\subsection{Finitely stable rings}
For a ring $R$ we can consider the matrix ring $M_k(R)$ that consists of $k \times k$ matrices with entries in $R$. Multiplication is the usual matrix multiplication and addition is matrix addition. 

\begin{definition}
We say that a ring $R$ is {\em k-stable} if the matrix ring $M_k(R)$ is Dedekind finite \footnote{There doesn't seem to be an agreed name for $k$-stable rings though this class of rings certainly have been considered in the literature}
We say  $R$ is {\em finitely stable} (or {\em fully Dedekind finite} or {\em weakly finite}) if $R$ is $k$-stable for each $k \in \{1,2,3,....\}$.
\end{definition}

\bigskip

\noindent
Commutative rings can be shown to be finitely stable. Noetherian rings and Artinian rings are stably finite. A subring of a stably finite ring and a matrix ring over a stably finite ring is stably finite.  

$1$-stable is the same as Dedekind finite. Notice that $m$-stable implies $k$-stable when $m>k$. Thus each 
$2$-stable ring is Dedekind finite.  The converse is not valid: There exists a Dedekind finite ring $R$ (in fact $R$ can be 
chosen to be a domain) such that $R$ fails to be a 2-stable. 
(see exercise 1.18 that outlines the argument). More generally there exists  for each $k>1$ a $k$-stable ring $R$ that is not 
$(k+1)$-stable.  
 
\begin{example}
The $n$th Weyl algebra is the ring $R_{{\rm weyl},n}:=k[x_1,x_2,...,x_n, \frac{\partial}{x_1},\frac{\partial}{x_2},...,\frac{\partial}{x_n}]$ of differential operators 
$\frac{\partial}{x_1},\frac{\partial}{x_2},...,\frac{\partial}{x_n}$ on the polynomial ring of $n$ variables over a field $k$ with the obvious relations. Weyl algebras are named after Hermann Weyl, who introduced them to study the
Heisenberg uncertainty principle in quantum mechanics. Each Weyl algebra $R_{{\rm weyl},n}$ is an infinitely dimensional vector space over $k$. The $n$th Weyl algebra is a simple Noetherian domain and thus stably finite.   
\hfill $\clubsuit$
\end{example}

\subsection{Requirements from finite capacity}

We will now argue that the class of Dedekind finite rings is too broad to serve as the class of rings that intuitively should be considered rings of {\em legitimate} operators.

Consider the network in figure \ref{capacity2}.

\begin{figure}[H]
\centering
\begin{tikzpicture}[descr/.style={fill=white,inner sep=2.5pt}]
  \matrix (m) [matrix of math nodes,row sep=3em,column sep=4em,minimum width=2em] {
     m_1 & & m_2 &  m_3 \\
     &  r_{14}m_1+r_{24}m_2 + r_{34}m_3 & & r_{15}m_1+r_{25}m_2+r_{35}m_3  & \\
      m_1 & & m_2 & \\};
  \path[-stealth]  
  (m-1-1) edge [bend left=0] node [descr] {$r_{14}$} (m-2-2)
  (m-1-3) edge  [bend right=0]  node [descr] {$r_{24}$} (m-2-2)
  (m-1-4) edge [bend left=10] node [descr] {$r_{34}=0$} (m-2-2)
  (m-1-1) edge [bend left=10] node [descr] {$r_{15}$} (m-2-4)
  (m-1-3) edge [bend left=20] node [descr] {$r_{25}$} (m-2-4)
  (m-1-4) edge node [descr] {$r_{35}=0$} (m-2-4)
  (m-2-2) edge node [descr] {$r_{46}$} (m-3-1)
  (m-2-2) edge node [descr] {$r_{47}$} (m-3-3)
  (m-2-4) edge [bend left=10] node [descr] {$r_{56}$}(m-3-1)
  (m-2-4) edge [bend left=10] node [descr] {$r_{57}$}(m-3-3);
\end{tikzpicture}
\caption{Strengthened capacity condition}
\label{capacity2}
\end{figure}
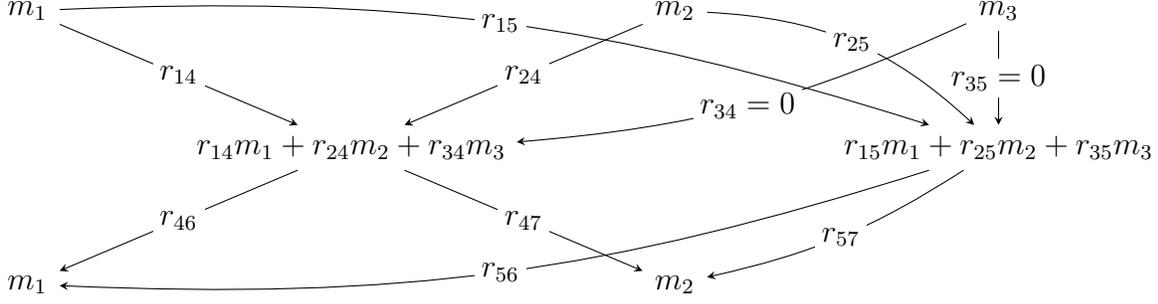

If we can assign operators $r_{ij} \in R \  i,j \in \{1,2,3,4,5,6,7,8\}$ to the edges in the graph such that $m_1$ and $m_2$ can be reconstructed correctly at the two lower  bottom nodes we get the following identitity:

\[\begin{bmatrix} r_{46} & r_{56} \\  r_{47} & r_{57}   \end{bmatrix}  \begin{bmatrix} r_{14} & r_{24} & r_{34}  \\ r_{15} & r_{25} & r_{35} \end{bmatrix} \begin{bmatrix} m_1 \\ m_2 \\ m_3 \end{bmatrix} = \begin{bmatrix} 1 & 0 & 0 \\ 0 & 1 & 0 \end{bmatrix} \begin{bmatrix} m_1 \\ m_2 \\ m_3 \end{bmatrix}= \begin{bmatrix} m_1 \\ m_2 \end{bmatrix} \]

which can also be written as
 
 \[\begin{bmatrix} r_{46} & r_{56} \\ r_{47} & r_{57} \end{bmatrix} \begin{bmatrix} r_{14} & r_{24} \\ r_{15} & r_{25} \end{bmatrix} \begin{bmatrix} m_1 \\ m_2 \end{bmatrix}=\begin{bmatrix} 1 & 0  \\ 0 & 1  \end{bmatrix} \begin{bmatrix} m_1 \\ m_2 \end{bmatrix} \]
 
 and
 \[\begin{bmatrix} r_{46} & r_{56} \\ r_{47} & r_{57} \end{bmatrix} \begin{bmatrix} r_{34} \\ r_{35} \end{bmatrix} \begin{bmatrix} m_3 \end{bmatrix}= \begin{bmatrix} 0 \\ 0 \end{bmatrix} \begin{bmatrix} m_3 \end{bmatrix}\]

 Now these matrix equations have a solution if and only if the following matrix equations are solvable for a suitable choice of operators $r_{i,j} \in R$ i.e. 
 
  \[\begin{bmatrix} r_{46} & r_{56} \\ r_{47} & r_{57} \end{bmatrix} \begin{bmatrix} r_{14} & r_{24} \\ r_{15} & r_{25} \end{bmatrix}=\begin{bmatrix} 1 & 0  \\ 0 & 1  \end{bmatrix}  \]
 
 and
 \[\begin{bmatrix} r_{46} & r_{56} \\ r_{47} & r_{57} \end{bmatrix} \begin{bmatrix} r_{34} & r'_{34} \\ r_{35}  &  r'_{35}  \end{bmatrix} = \begin{bmatrix} 0 & 0 \\ 0 & 0 \end{bmatrix} \]

This condition is equivalent to the matrix ring $M_2(R)$ satisfying the condition  

 \[\forall x,y,z \in R  \ ((xy=1 \wedge xz=0) \rightarrow z=0)\]
 
 where 
 
 \[x=\begin{bmatrix} r_{46} & r_{56} \\ r_{47} & r_{57} \end{bmatrix}, y=\begin{bmatrix} r_{14} & r_{24} \\ r_{15} & r_{25} \end{bmatrix}, z=  \begin{bmatrix} r_{34} & r'_{34} \\ r_{35}  &  r'_{35}  \end{bmatrix} \]
 
Using proposition \ref{equiv} we conclude that the matrix equations have a solution if and and only ring $M_2(R)$ Dedekind finite. Thus the matrix equations are solvable if and only if $R$ is $2$-stable.

\begin{example}  
A ring $R$ has the Invariant Basis Number property (IBN-property) if $R^m$ and $R^n$ are isomorphic as $R$ modules if and only if
$m=n$.   Stably finite rings are IBN rings.

Consider figure  \ref{capacityk} and consider the principle ${\rm Prin}(n)$ that $x_1,x_2,....x_{n+1}$ cannot all be reconstructions at the $n+1$ corresponding receiver nodes. ${\rm Prin}(1)$ is equivalent to condition $(**)$. For $n \geq 1$ the principle 
${\rm Prin}(n)$ can be shown to be equivalent to the property that $R^n$ is not isomorphic to $R^{n+1}$.  The principle ${\rm Prin}(n)$ for $n=1,2,3,...$ can be shown to be equivalent to $R$ has the IBN-property.
\end{example}
 
Consider figure \ref{capacityk}. Intuitively, if $x_1,x_2,...,x_n$ can be reconstructed correctly at the receiver notes $x_1,x_2,...,x_n$, then the message received by node $n+1$ must be independent of $x_{n+1}$.

\begin{figure}[H]
\center
\includegraphics[scale=0.5]{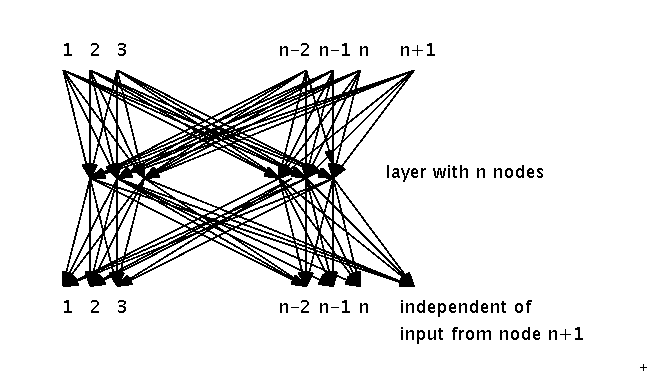} 
\caption{}
 \label{capacityk}
\end{figure}

The case $n=1$ is the network where the condition {\em Cond}$(1)$ on $R$ is equivalent to $R$ being Dedekind finite. The condition arising for $n=2$ is the condition {\em Cond}$(2)$ we just considered leading to the requirement that $M_2(R)$ is Dedekind finite.

Now consider {\em Cond}$(k)$. Let the inputs denote $m_1,m_2,...,m_k$ for nodes $1,2,...,k$ and denote the input to node $k+1$ by $m_{k+1}$.  {\em Cond}$(k)$ can be written as the condition that the matrix identity 

\[\tilde{B} \tilde{A}=1,  \tilde{B}\tilde{C}=0\]
of $k \times k$ matices in $M_k(R)$  implies that  $\tilde{C}=0$. Thus {\em Cond}$(k)$ is equivalent to the condition that the matrix ring $M_k(R)$ satisfies the condition

 \[\forall x,y,z \in R  \ ((xy=1 \wedge xz=0) \rightarrow z=0)\]
 
 Again, applying proposition \ref{equiv}  this is equivalent to the matrix ring $M_k(R)$ being Dedekind finite.  Thus we get:

\begin{proposition} \label{finitelystable}
A ring $R$ satisfies condition  {\em Cond}$(k)$ if and only if $R$ is $k$-stable.
A ring $R$ satisfies each condition  {\em Cond}$(k)$ for $k \in \{1,2,3,...\}$ if and only if $R$ is stably finite.
\end{proposition}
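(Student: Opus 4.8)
\noindent{\bf Proof outline.}
The first assertion will follow by generalising the $k=2$ analysis carried out just above, and the second is then an immediate consequence of the definition. The plan has three steps: (i) turn the informal description of {\em Cond}$(k)$ into a precise statement about a faithful module $\tilde M=(R,M)$; (ii) use faithfulness of the action to rewrite it as a statement about $k\times k$ matrices over $R$; and (iii) recognise this as the capacity condition (item \ref{new} of Theorem \ref{equiv}) for the ring $M_k(R)$, after which Theorem \ref{equiv} together with the definition of $k$-stability finishes the argument.

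For steps (i) and (ii): the layered network of figure \ref{capacityk} has $k+1$ source messages $m_1,\dots,m_{k+1}\in M$, a middle layer of $k$ nodes, and $k$ sink nodes which are required to output $m_1,\dots,m_k$. Let $\tilde A\in M_k(R)$ be the matrix of operators encoding $m_1,\dots,m_k$ into the middle layer, let $c\in R^k$ be the column of operators encoding $m_{k+1}$ into the middle layer, and let $\tilde B\in M_k(R)$ be the matrix of decoding operators from the middle layer to the sinks. Sink $i$ then outputs $\sum_{j=1}^{k}[\tilde B\tilde A]_{ij}\,m_j+[\tilde Bc]_i\,m_{k+1}\in M$, and demanding that this equal $m_i$ for all $m_1,\dots,m_{k+1}\in M$ is, since $R$ acts faithfully on $M$ (exactly as in the passage from $(*)$ to $(**)$ in Section~1), equivalent to the two operator identities $\tilde B\tilde A=I_k$ and $\tilde Bc=0$ in $M_k(R)$. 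Condition {\em Cond}$(k)$ is precisely the assertion that every such configuration has $c=0$.

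For step (iii): I claim {\em Cond}$(k)$ is literally the capacity condition
\[\forall X,Y,Z\in M_k(R)\ \big((XY=I_k\wedge XZ=0)\rightarrow Z=0\big)\]
for the ring $M_k(R)$. Indeed, if this holds and $\tilde B\tilde A=I_k$, $\tilde Bc=0$, let $Z\in M_k(R)$ be the matrix whose first column is $c$ and whose other columns are $0$; then $XZ=0$ with $X:=\tilde B$, $Y:=\tilde A$, so the capacity condition forces $Z=0$ and in particular $c=0$, which is {\em Cond}$(k)$. Conversely, if the capacity condition fails, choose $X,Y,Z\in M_k(R)$ with $XY=I_k$, $XZ=0$ and $Z\neq0$; a nonzero column $c$ of $Z$ then satisfies $Xc=0$, so $\tilde B:=X$, $\tilde A:=Y$ and this $c$ describe a configuration of the network that correctly reconstructs $m_1,\dots,m_k$ yet has $c\neq0$, so {\em Cond}$(k)$ fails. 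Applying Theorem \ref{equiv} --- the equivalence of its items \ref{eq1} and \ref{new} --- to the ring $M_k(R)$, this capacity condition holds for $M_k(R)$ if and only if $M_k(R)$ is Dedekind finite, which is by definition exactly the statement that $R$ is $k$-stable. Hence {\em Cond}$(k)$ holds if and only if $R$ is $k$-stable, which is the first assertion; and then $R$ satisfies {\em Cond}$(k)$ for every $k\in\{1,2,3,\dots\}$ if and only if $R$ is $k$-stable for every such $k$, i.e. if and only if $R$ is stably finite, which is the second assertion.

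The one place where care is genuinely needed is step (iii): the network only ever produces a single extra encoding column $c$, whereas the capacity condition for $M_k(R)$ quantifies over a full matrix $Z$, so the step of padding $c$ with zero columns, and of extracting a nonzero column from $Z$, must be spelled out as above rather than glossed over. One should also be explicit that it is the faithfulness hypothesis on $M$ that licenses replacing the module-level reconstruction equalities by the operator identities $\tilde B\tilde A=I_k$ and $\tilde Bc=0$; without it {\em Cond}$(k)$ would have to be formulated directly in terms of these operator identities.
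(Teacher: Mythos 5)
Your proposal is correct and follows essentially the same route as the paper: translate {\em Cond}$(k)$ into the matrix identities $\tilde B\tilde A=I_k$, $\tilde B\tilde C=0\Rightarrow\tilde C=0$ in $M_k(R)$ and invoke the capacity-condition item of Theorem \ref{equiv} for $M_k(R)$, with the second assertion following from the definition of stable finiteness. Your explicit treatment of padding the single column $c$ with zero columns (and extracting a nonzero column in the converse) is the same device the paper uses in its $k=2$ computation with $r'_{34},r'_{35}$, just spelled out more carefully.
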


\section{Digital signals can be more efficient than analogue signals}

As previous consider a $R$-module $\tilde{M}=(R,M)$, where the elements in the ring $R$ acts on the elements in the additive space of messages (signals)  $M$. In general when signals (i.e. point in $M$) are real or complex valued functions, $f(t)+f(t)=0$ only when $f(t)=0$. Thus if $1 \in R$ is the identity operator  $1+1 \neq 0$.  On the other hand if data consists of binary strings ${\rm bit}(j), j \in I$ (with $I$ finite or infinite),  ${\rm bit}(j) + {\rm bit}(j)=0$ and thus $1+1=0$.  

Consider the following Information Network in figure \ref{fig4}. This network was constructed based on intuitive, informal considerations. It is possible to be more systematic and apply the constructions in \cite{poly}. However, these methods only seem to work for commutative rings, and a lot of extra work would be needed to implement this approach. Also, the resulting networks would be larger and more complex than the ad hoc network constructed here. In the communication network we have constructed in figure \ref{fig4}, the task is to transmit messages $x,y,z \in M$ from node $1,2$ and $3$ to the notes  $7,9,10$ and $11$ as indicated. 

\begin{figure}[H]
\center
 \includegraphics[scale=0.8]{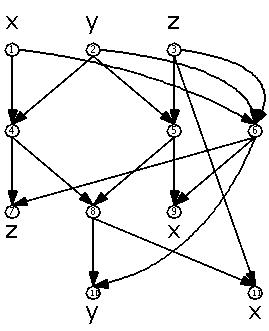} \label{fig4} \label{figure6}
 \caption{}
 \end{figure}

We will show:

\begin{theorem} \label{th4}
The Communication Network in figure \ref{fig4} has a linear solution over any ring where $1+1=0$, but has no solutions over Dedekind finite rings where $1+1 \neq 0$ 
\end{theorem}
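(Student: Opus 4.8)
The argument splits into the two implications, which I would handle by very different means.

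For the first half (solvability when $1+1=0$), the plan is to write down an explicit linear solution by hand. Over a ring $R$ with $1+1=0$ it suffices to let every edge carry a $0/1$-combination of the source symbols $x,y,z$ --- that is, to label each edge with a subset of $\{x,y,z\}$ and transmit the sum --- so I would start at the sources $1,2,3$ and push these labels forward through each coding node, choosing at each fan-in the combination forced by the demands at the sinks $7,9,10,11$. One then verifies that each sink recovers its demanded message. This verification is a finite, mechanical check: the only places it can fail over a general ring are those where a coefficient would be $1+1$ or where a source symbol must cancel against a copy of itself, and these are exactly the spots where $1+1=0$ is used. I do not expect difficulty here beyond producing and recording the labelling.

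For the second half (no solution over a Dedekind finite $R$ with $1+1\neq 0$), the plan is a proof by contradiction. Assume $R$ is Dedekind finite, $1+1\neq 0$, and that a linear solution exists --- over a faithful $R$-module, or equivalently in terms of the formal decoding identities in $R$ exactly as used earlier for $(**)$ and $\mathrm{Cond}(k)$. By linearity the symbol carried by an edge $e$ has the form $\alpha_e x+\beta_e y+\gamma_e z$ with coefficients in $R$ determined recursively from the coding operators, and the reconstruction requirement at each sink becomes a system of coefficient identities in $R$. I would then: (i) locate inside Figure~\ref{fig4} a bottleneck isomorphic to the capacity gadget of Figure~\ref{capacity1} --- a node whose unique outgoing edge is the only route by which some sink reconstructs a source, say $x$, while that sink has no independent access to a second source, say $y$; there the identities read $dr=1$ and $ds=0$, so the capacity condition (Theorem~\ref{equiv}, item~\ref{new}), i.e.\ Dedekind finiteness, forces a specified edge coefficient to vanish; (ii) substitute the vanished coefficient(s) back into the surviving sink identities, at which point one and the same edge symbol is forced to serve two incompatible demands, collapsing to the identity $1+1=0$; (iii) read off the contradiction with $1+1\neq 0$.

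The routine parts are the forward propagation of labels and the bookkeeping of the coefficient identities. The main obstacle --- and where the real content of the network design lives --- is steps (i)--(ii): identifying exactly which sub-configuration of Figure~\ref{fig4} plays the role of the capacity gadget, checking that the relevant sink genuinely has no alternative access to the ``other'' source (so that it is really the capacity condition, and hence Dedekind finiteness, that is being invoked, and not some weaker structural fact), and then tracing how the forced zero coefficient propagates until the remaining demands degenerate to $1+1=0$. A useful consistency check throughout is that this chain of reasoning must break down precisely when $R$ is permitted to fail Dedekind finiteness, since such rings do admit solutions with $1+1\neq 0$; that is what pins down where item~\ref{new} of Theorem~\ref{equiv} has to enter.
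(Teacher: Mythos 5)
Your first half matches the paper's approach: the explicit solution given there is the all-ones labelling $r_{ij}=1$ on every edge, which is exactly the kind of $0/1$ assignment you describe, and the verification is the mechanical check that each of the twelve path-sum identities holds because $1+1=0$.

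The second half has a genuine gap. Your plan hinges on locating inside the network a sub-configuration that literally realizes the capacity gadget of Figure \ref{capacity1}, i.e.\ a pair of identities $dr=1$ and $ds=0$ to which item \ref{new} of Theorem \ref{equiv} applies and which forces some edge coefficient to vanish. No such pair exists here: among the twelve path identities, every equation with right-hand side $0$ is a \emph{sum} of two or three path products (e.g.\ $r_{5,9}r_{2,5}+r_{6,9}r_{2,6}=0$), never a single product, so the hypothesis $xz=0$ of the capacity condition is never available; indeed the network is engineered so that no coefficient vanishes. The paper's argument runs in the opposite direction: Dedekind finiteness is invoked in the form ``$xy=1$ implies $yx=1$'' to upgrade every one-sided inverse arising from an identity $rs=1$ or $rst=1$ to a two-sided inverse, after which each zero-sum identity of the form $u+v=0$ with one summand a product of invertibles shows the other summand's factors are invertible too; in this way essentially all the edge operators are shown invertible. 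One then solves for the coefficients and substitutes through a long chain (steps (13)--(37) of the paper) until the identity for the paths from $i_z$ to $o_y$ collapses to $2\,(r_{6,10}r_{3,6})=0$ with $r_{6,10}r_{3,6}$ invertible, whence $1+1=0$. Your closing consistency check is reasonable, but the place where Dedekind finiteness actually enters is this invertibility upgrade, not a local application of the capacity condition, and as stated your steps (i)--(ii) cannot be carried out on this network.
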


\noindent
\begin{proof} Let us denote the operators (ring elements) assigned to the different
edges by \[r_{1,4},r_{1,6},r_{2,4},r_{2,5},r_{2,6},r_{3,5},r_{3,6},r_{3,11},r_{4,7},r_{4,8},r_{5,8},r_{5,9},r_{6,7},r_{6,9},r_{6,10},r_{8,10},r_{8,11}\]

\noindent 
The ring elements provide a solution to the communication problem if and only if the following $12$ equations hold.

\bigskip

\noindent
$(1) \quad $ paths from $i_x$ to $o_x: \quad r_{6,9}r_{1,6}=1$

\bigskip

\noindent
$(2) \quad $ paths from $i_x$ to $o_y: \quad r_{8,10}r_{4,8}r_{1,4}+r_{6,10}r_{1,6}=0$

\bigskip

\noindent
$(3) \quad $ paths from $i_x$ to $o_z: \quad r_{4,7}r_{1,4}+r_{6,7}r_{1,6}=0$

\bigskip

\noindent
$(4) \quad $ paths from $i_x$ to $\bar{o}_{x}: \quad r_{8,11}r_{4,8}r_{1,4}=1$

\bigskip

\noindent
$(5) \quad $ paths from $i_y$ to $o_x: \quad r_{5,9}r_{2,5}+r_{6,9}r_{2,6}=0$

\bigskip

\noindent
$(6) \quad $ paths from $i_y$ to $o_y: \quad r_{8,10}r_{4,8}r_{2,4}+r_{8,10}r_{5,8}r_{2,5}+r_{6,10}r_{6,2}=1$

\bigskip

\noindent
$(7) \quad $ paths from $i_y$ to $o_z:  \quad r_{4,7}r_{2,4}+r_{6,7}r_{2,6}=0$

\bigskip

\noindent
$(8) \quad $ paths from $i_y$ to $\bar{o}_{x}: \quad r_{8,11}r_{4,8}r_{2,4}+r_{8,11}r_{5,8}r_{2,5}=0$

\bigskip

\noindent
$(9) \quad $ paths from $i_z$ to $o_x: \quad r_{5,9}r_{3,5}+r_{6,9}r_{3,6}=0$

\bigskip

\noindent
$(10) \quad $ paths from $i_z$ to $o_y: \quad r_{6,10}r_{3,6}+r_{8,10}r_{5,8}r_{3,5}=0$

\bigskip

\noindent
$(11) \quad $ paths from $i_z$ to $o_z: \quad r_{6,7}r_{3,6}=1$

\bigskip

\noindent
$(12) \quad $ paths from $i_z$ to $\bar{o}_{x}: \quad r_{8,11}r_{5,8}r_{3,5}+ r_{3,11}=0$

\bigskip

\noindent
The first part of the theorem follows since the equations (1)-(12) have a solution over any ring where $1+1=0$ since equations (1)-(12) holds if we chose each $r_{ij}=1$ i.e. 
\[r_{1,4}=r_{1,6}=r_{2,4}=r_{2,5}=r_{2,6}=r_{3,5}=r_{3,6}=r_{3,11}=\] \[r_{4,7}=r_{4,8}=r_{5,8}=r_{5,9}=r_{6,7}=r_{6,9}=r_{6,10}=r_{8,10}=r_{8,11}=1\]

To show the second part of the theorem we need to show that any Dedekind finite ring $R$ that satisfies equations (1)-(12)
is forced to have $1+1=0$. 

\bigskip

\noindent
$(13)$ \quad According to (1)  $r_{6,9}r_{1,6}=1$.  Thus $r_{1,6}$ and $r_{6,9}$ each have a two-sided inverse. 

\bigskip

\noindent
$(14)$ \quad According to (4) $ r_{8,11}r_{4,8}r_{1,4}=1$.  Thus $ r_{8,11},r_{4,8}$ and $r_{1,4}$ each have a two-sided inverse.

\bigskip

\noindent
$(15)$ \quad According to (8) $r_{8,11}r_{4,8}r_{2,4}+r_{8,11}r_{5,8}r_{2,5}=0$. Since $r_{8,11}$ has a left inverse (14), it follows that
$r_{4,8}r_{2,4}+r_{5,8}r_{2,5}=0$.

\bigskip

\noindent
$(16)$  According ro (6)  \quad $r_{8,10}r_{4,8}r_{2,4}+r_{8,10}r_{5,8}r_{2,5}+r_{6,10}r_{2,6}=1$. According to (15)
$r_{8,10}(r_{4,8}r_{2,4}+r_{5,8}r_{2,5})+r_{6,10}r_{2,6}=r_{6,10}r_{2,6}=1$. Thus $r_{2,6}$ and $r_{6,10}$ each have a two-sided  inverse.

\bigskip

\noindent
$(17)$ \quad According to (5) $r_{5,9}r_{2,5}+r_{6,9}r_{2,6}=0$. We showed that $r_{2,6}$ has a two-sided inverse (16), and that $r_{6,9}$ has a two-sided inverse (13). Thus $r_{5,9}r_{2,5}r_{2,6}^{-1}r_{6,9}^{-1}=-1$ from which its straight forward to conclude that $r_{5,9}$ and  $r_{2,5}$ each have a two sided inverse.

\bigskip

\noindent
$(18)$ \quad According to (11) $r_{6,7}r_{3,6}=1$. Thus $r_{3,6}$ and $r_{6,7}$  each have a two sided inverse.

\bigskip

\noindent
$(19)$ \quad According to (9) $r_{5,9}r_{3,5}+r_{6,9}r_{3,6}=0$. We have already shown that $r_{3,6}$ has a two-sided inverse (18), and we also have shown that $r_{6,9}$ has a two-sided inverse (13). Thus $r_{5,9}r_{3,5}r_{3,6}^{-1}r_{6,9}^{-1}=-1$ and thus $r_{3,5}$ has a two sided inverse.

\bigskip

\noindent
$(20)$ \quad According to (7) $r_{4,7}r_{2,4}+r_{6,7}r_{2,6}=0$.  We already showed that $r_{6,7}$ has a two-sided inverse (18), and we also showed that $r_{2,6}$ has a two-sided inverse (16). Thus $r_{4,7}r_{2,4}r_{2,6}^{-1}r_{6,7}^{-1}=-1$.  Thus $r_{4,7}$ and $r_{2,4}$ each have a two-sided inverse.

\bigskip

\noindent
$(21)$ \quad According to (10) $r_{6,10}r_{3,6}+r_{8,10}r_{5,8}r_{3,5}=0$. Now $r_{3,6}$ and $r_{6,10}$ each has a two-sided inverse (18),(16).  From this we conclude that $r_{8,10}r_{5,8}r_{3,5}r_{3,6}^{-1}r_{6,10}^{-1}=-1$ and that $r_{8,10}$ and $r_{5,8}$ each has a two sided inverse.

\bigskip

\noindent
$(22)$ \quad According to (8) $r_{4,8}r_{2,4}+r_{5,8}r_{2,5}=0$. We already showed that $r_{2,5}$ and $r_{5,8}$ each has a two-sided inverse (17),(21).  Thus $r_{4,8}r_{2,4}r_{2,5}^{-1}r_{5,8}^{-1}=-1$ and $r_{4,8}$ has a two sided inverse.

\bigskip

\noindent
$(23)$ According to (1) \quad $r_{6,9}r_{1,6}=1$. From this we already concluded in (13) that  $r_{6,9}$ and $r_{1,6}$ each has a two-sided inverse, and thus $r_{1,6}=r_{6,9}^{-1}$.  

\bigskip

\noindent
$(24)$ \quad According to (2) $r_{8,10}r_{4,8}r_{1,4}+r_{6,10}r_{1,6}=0$ thus according to (13), (14) and (21) we can conclude that $r_{1,4}= -r_{4,8}^{-1}r_{8,10}^{-1}r_{6,10}r_{6,9}^{-1}$.

\bigskip

\noindent
$(25)$ \quad According to (3) $r_{4,7}r_{1,4}+r_{6,7}r_{1,6}=0$. Thus according to (24) it follows that
$r_{4,7}(-r_{4,8}^{-1}r_{8,10}^{-1}r_{6,10}r_{6,9}^{-1})+r_{6,7}r_{6,9}^{-1}=0$.  From this and (13) we conclude that $r_{6,7}=r_{4,7}r_{4,8}^{-1}r_{8,10}^{-1}r_{6,10}$. 

\bigskip

\noindent
$(26)$ \quad According to (4) $r_{8,11}r_{4,8}r_{1,4}=1$.  From (24) we conclude that  $r_{8,11}=r_{1,4}^{-1}r_{4,8}^{-1}=(-r_{4,8}^{-1}r_{8,10}^{-1}r_{6,10}r_{6,9}^{-1})^{-1}r_{4,8}^{-1}=-r_{6,9}r_{6,10}^{-1}r_{8,10}r_{4,8}r_{4,8}^{-1}=-r_{6,9}r_{6,10}^{-1}r_{8,10}$

\bigskip

\noindent
$(27)$ \quad According to (5) $r_{5,9}r_{2,5}+r_{6,9}r_{2,6}=0$.  According to (17) it follows that $r_{2,5}=-r_{5,9}^{-1}r_{6,9}r_{2,6}$. 

\bigskip

\noindent
$(28)$ \quad According to (8) $r_{8,11}r_{4,8}r_{2,4}+r_{8,11}r_{5,8}r_{2,5}=0$. According to (21) $r_{4,8}r_{2,4}+r_{5,8}r_{2,5}=0$ and (14) $r_{2,4}=-r_{4,8}^{-1}r_{5,8}r_{2,5}$. Substituting (22) into this we get $r_{2,4}=-r_{4,8}^{-1}r_{5,8}r_{2,5}=-r_{4,8}^{-1}r_{5,8} (-r_{5,9}^{-1}r_{6,9}r_{2,6})= r_{4,8}^{-1}r_{5,8} r_{5,9}^{-1}r_{6,9}r_{2,6}$.

\bigskip

\noindent
$(29)$ \quad According to (6) $r_{8,10}r_{4,8}r_{2,4}+r_{8,10}r_{5,8}r_{2,5}+r_{6,10}r_{2,6}=1$. According to (16) 
$r_{6,10}r_{2,6}=1$ and thus $r_{2,6}=r_{6,10}^{-1}$. 

\bigskip

\noindent
$(30)$ \quad According to (27) and (29) $r_{2,5}=-r_{5,9}^{-1}r_{6,9}r_{2,6}=-r_{5,9}^{-1}r_{6,9}r_{6,10}^{-1}$.

\bigskip

\noindent
$(31)$ \quad According to (7) $r_{4,7}r_{2,4}+r_{6,7}r_{2,6}=0$. According to (28) we have $0=r_{4,7}r_{2,4}+r_{6,7}r_{2,6}=r_{4,7} ( r_{4,8}^{-1}r_{5,8} r_{5,9}^{-1}r_{6,9}r_{6,10}^{-1})+ (r_{4,7}r_{4,8}^{-1}r_{8,10}^{-1}r_{6,10})r_{6,10}^{-1}$.
Thus $0=r_{4,7}  r_{4,8}^{-1}r_{5,8} r_{5,9}^{-1}r_{6,9}r_{6,10}^{-1}+ r_{4,7}r_{4,8}^{-1}r_{8,10}^{-1}r_{6,10}r_{6,10}^{-1}=
r_{4,7}  r_{4,8}^{-1}r_{5,8} r_{5,9}^{-1}r_{6,9}r_{6,10}^{-1}+ r_{4,7}r_{4,8}^{-1}r_{8,10}^{-1}$. Thus
$r_{8,10}=-r_{6,10}r_{6,9}^{-1}r_{5,9}r_{5,8}^{-1}$ and $r_{8,10}^{-1}=-r_{5,8}r_{5,9}^{-1}r_{6,9}r_{6,10}^{-1}$.

\bigskip

\noindent
$(32)$ \quad According to (24) and (31) $r_{1,4}=-r_{4,8}^{-1}r_{8,10}^{-1}r_{6,10}r_{6,9}^{-1}=-r_{4,8}^{-1}(-r_{5,8}r_{5,9}^{-1}r_{6,9}r_{6,10}^{-1})r_{6,10}r_{6,9}^{-1}=r_{4,8}^{-1}r_{5,8}r_{5,9}^{-1}$

\bigskip

\noindent
$(33)$ \quad According to (25) and (31)  $r_{6,7}=r_{4,7}r_{4,8}^{-1}r_{8,10}^{-1}r_{6,10}=r_{4,7}r_{4,8}^{-1}(-r_{5,8}r_{5,9}^{-1}r_{6,9}r_{6,10}^{-1})r_{6,10}=-r_{4,7}r_{4,8}^{-1}r_{5,8}r_{5,9}^{-1}r_{6,9}$.

\bigskip

\noindent
$(34)$ \quad According to (26) and (31) $r_{8,11}=-r_{6,9}r_{6,10}^{-1}r_{8,10}=-r_{6,9}r_{6,10}^{-1}(-r_{6,10}r_{6,9}^{-1}r_{5,9}r_{5,8}^{-1})=r_{5,9}r_{5,8}^{-1}$.

\bigskip

\noindent
$(35)$ According to (9) \quad $r_{5,9}r_{3,5}+r_{6,9}r_{3,6}=0$. Thus according to (17) $r_{3,5}=-r_{5,9}^{-1}r_{6,9}r_{3,6}$.

\bigskip

\noindent
$(36)$ Finally according to (10) \quad $r_{6,10}r_{3,6}+r_{8,10}r_{5,8}r_{3,5}=0$.  Combining this with (31) and (35) we get $0=r_{6,10}r_{3,6}+r_{8,10}r_{5,8}r_{3,5}=$
$r_{6,10}r_{3,6}+(-r_{6,10}r_{6,9}^{-1}r_{5,9}r_{5,8}^{-1})r_{5,8}(-r_{5,9}^{-1}r_{6,9}r_{3,6})= r_{6,10}r_{3,6}+r_{6,10}r_{3,6}=2(r_{6,10}r_{3,6})$.  

\bigskip

\noindent
$(37)$ Multiplying (36) with $r_{3,6}^{-1}r_{6,10}^{-1}$ from the right (or from the left)  we get  $0=2$.

 \bigskip
 
 \noindent
 {\em Remark:} We did not use equation (12) in this derivation. And equation (11) was only used to show each ring element $r_{ij}$ had a two sided inverse.

\end{proof}

\begin{corollary}
The network $N$ in figure \ref{fig4} cannot be solved over any ring $R$ (Dedekind finite) of operators that acts on analogue signals (i.e. $1+1 \neq 0$).  The network $N$ is solvable over any ring $R$ that acts on digital (binary) signals (i.e.where $1+1=0$).
\end{corollary}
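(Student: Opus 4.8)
The plan is to translate the routing constraints of the network in figure~\ref{fig4} into the system of operator identities (1)--(12) produced by the path-counting rule, dispose of the characteristic-two case by a one-line substitution, and then show that any Dedekind finite ring admitting a solution must satisfy $1+1=0$. The first task is to justify that (1)--(12) really are the solvability conditions: by the rule recalled earlier --- the symbol delivered to a node is the sum over all directed source-to-node paths of the products of the edge operators along each path --- the demands that $7,9,10,11$ recover $x$, $y$, $z$ with the prescribed coefficient patterns, read off the coefficients of $m_1,m_2,m_3$, are exactly the twelve displayed equations.

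For the first assertion one checks that assigning every edge operator the value $1$ solves (1)--(12) whenever $1+1=0$: each equation with right-hand side $1$ becomes a sum of an odd number of copies of $1$ and each equation with right-hand side $0$ a sum of an even number of copies of $1$, so all hold in characteristic two. This is the ``digital'' solution.

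For the second assertion I would argue in two phases. \emph{Phase~1 (invertibility).} The engine is Theorem~\ref{equiv}: in a Dedekind finite ring a one-sided inverse is automatically two-sided, and by the capacity condition $xy=1$ together with $xz=0$ forces $z=0$, so an already-invertible factor may be cancelled out of a vanishing product. Starting from the three equations with right-hand side $1$ --- (1) makes $r_{1,6},r_{6,9}$ invertible, (4) makes $r_{1,4},r_{4,8},r_{8,11}$ invertible, (11) makes $r_{3,6},r_{6,7}$ invertible --- I would cancel the known invertible factors out of the vanishing equations in a carefully chosen order, namely (8), (6), (5), (9), (7), (10), each cancellation exposing a fresh product equal to $\pm 1$ and hence a new two-sided invertible operator, until every edge operator appearing in (1)--(11) is two-sided invertible. \emph{Phase~2 (collapse).} With invertibility in hand I would solve the system explicitly: from (1)--(4) express $r_{1,6},r_{1,4},r_{6,7},r_{8,11}$ in terms of the remaining operators; from (5)--(8) express $r_{2,6},r_{2,5},r_{2,4}$ and derive the pivotal identity $r_{8,10}=-r_{6,10}r_{6,9}^{-1}r_{5,9}r_{5,8}^{-1}$; from (9) obtain $r_{3,5}=-r_{5,9}^{-1}r_{6,9}r_{3,6}$. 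Substituting all of this into equation (10), the two monomials $r_{6,10}r_{3,6}$ and $r_{8,10}r_{5,8}r_{3,5}$ turn out to coincide rather than cancel, so (10) reads $2\,r_{6,10}r_{3,6}=0$; multiplying on the right by $r_{3,6}^{-1}r_{6,10}^{-1}$ gives $1+1=0$, a contradiction. The stated corollary then follows at once, since a ring of operators acting on analogue signals has $1+1\neq 0$ while a ring acting on binary signals has $1+1=0$.

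The step I expect to be the main obstacle is Phase~1. The chain of deductions only closes because the network is engineered so that, after deleting the known invertible factors, every vanishing equation leaves a relation of exactly the form $xy=1 \wedge xz=0$ to which the capacity condition applies, thereby producing the next invertible operator; getting the order of the equations right so that this chain never stalls, and checking at each cancellation that the factor removed is genuinely \emph{two-sided} invertible (so that Theorem~\ref{equiv} applies and not merely a one-sided hypothesis), is the delicate part. Phase~2 is then a lengthy but routine substitution whose only real surprise --- and the reason the theorem holds --- is that the final equation degenerates to $2=0$ rather than to a tautology, so I would keep careful track of signs there. As a sanity check I would note that equation (12) is never used and equation (11) enters only to supply the invertibility of $r_{3,6}$.
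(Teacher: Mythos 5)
Your proposal is correct and follows essentially the same route as the paper: the corollary is an immediate reformulation of Theorem \ref{th4}, and your two-phase argument (the all-ones assignment in characteristic two, then the invertibility chain through equations (1), (4), (8), (6), (5), (11), (9), (7), (10) followed by the substitution that collapses equation (10) to $2\,r_{6,10}r_{3,6}=0$) reproduces the paper's proof of that theorem step for step, including the observation that (12) is unused and (11) serves only to supply invertibility.
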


\noindent
{\bf Remark:} The communication network is not a multiple unicast network (i.e. a communication network where each message is required at exactly one receiver node) , however it can be shown- as explained in section XII:  multiple unicast networks \cite{DNS} - that it possible to modify the network so it becomes a multiple unicast networks that separate digital from analogue. 

\section{Communication network that favours analogue signals over digital signals}

\begin{figure}[H]
\center
\includegraphics[scale=0.5]{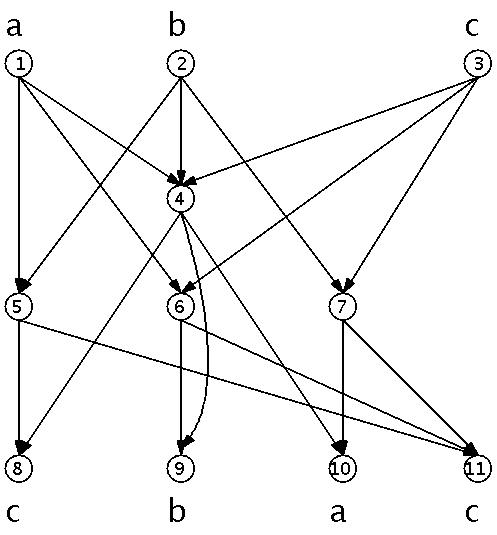} \label{analogue} 
\caption{}
\end{figure}

Consider the communication network in figure \ref{analogue}. We will show

\begin{theorem} \label{th5}
The Network in figure \ref{analogue} is solvable over any ring $R$ where the element $1+1$ is invertible. The network is {\em not} solvable over any Dedekind finite ring $R$ where $1+1$ fails to be invertible.  
\end{theorem}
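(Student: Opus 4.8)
The plan is to mirror, step for step, the proof of Theorem \ref{th4}, with ``$1+1=0$'' replaced throughout by ``$1+1$ invertible.'' First I would label each edge of the network in Figure \ref{analogue} by a ring element $r_{i,j}$ and write down the system of \emph{path equations}: for each source node $s$ and each target node $t$, the sum over all directed $s$--$t$ paths of the product of the edge operators along the path equals $1$ if $t$ is required to recover $s$'s message and $0$ otherwise. This is precisely the ``sum of products along paths'' bookkeeping used in the butterfly computation and in equations (1)--(12) of the proof of Theorem \ref{th4}.

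For the positive direction (solvability when $1+1$ is a unit), I would exhibit an explicit edge assignment. By analogy with the classical network-coding gadgets in which an intermediate node forms a combination like $\tfrac12(x+y)$, the natural guess is to set most edge operators to $\pm 1$ and to place $(1+1)^{-1}$ on one or two decoding edges; since $1+1$ is central, left/right inverses coincide and one simply verifies that the finitely many path equations collapse to identities using $(1+1)(1+1)^{-1}=1$. This direction is routine once the correct constants are guessed.

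For the negative direction, assume $R$ is Dedekind finite and that operators $r_{i,j}$ satisfy all path equations. As in steps (13)--(22) of the proof of Theorem \ref{th4}, I would first use the equations of the form ``(product of edge operators) $=1$'', together with Theorem \ref{equiv} (one-sided inverses are two-sided in a Dedekind finite ring), to show one operator at a time that every relevant $r_{i,j}$ has a two-sided inverse; the equations of the form ``(sum of two path-products) $=0$'' then propagate invertibility, exactly as in steps (17), (19), (20). Once all operators in play are units, I would substitute back into the remaining ``analogue'' equations and collapse them, as step (36) collapsed to $2(r_{6,10}r_{3,6})=0$. Here the collapse should instead yield an identity of the shape $(1+1)\,u=v$ with $u,v$ already known to be two-sided units; multiplying by $v^{-1}$ on the left and $u^{-1}$ on the right shows $1+1$ is a two-sided unit, contradicting the hypothesis.

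The main obstacle is making the invertibility bootstrap go through. In Theorem \ref{th4} this worked because enough equations had the form ``single product $=1$'' to seed the induction, and each $0$-equation had exactly one summand already known to be invertible. For Figure \ref{analogue} I would need to check that its path structure still supplies such a seed and that no $0$-equation is ``under-determined'' (two summands, neither yet a unit); if one is, I would break the symmetry with an auxiliary argument, e.g. applying the cancellation property of Theorem \ref{equiv}(\ref{new}) or invoking a different path equation, before continuing the propagation.
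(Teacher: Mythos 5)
Your overall architecture matches the paper's proof: the same twelve path equations, the same Dedekind-finite bootstrap turning one-sided into two-sided inverses, and an explicit positive-direction solution that places $\pm 1$ on most edges and $(1+1)^{-1}$ on the decoding edges into the right-hand receiver (the paper uses $r_{11,6}=r_{11,7}=-r_{11,5}=\tfrac12$, so three edges rather than one or two, but that is cosmetic and the verification is indeed routine).

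The genuine gap is in the negative direction, and it is exactly the obstacle you flag in your last paragraph without resolving. The bootstrap of steps (13)--(23) works for every operator \emph{except} $r_{11,5}$, $r_{11,6}$ and $r_{11,7}$: these appear only in equation (4), $r_{11,7}r_{7,3}+r_{11,6}r_{6,3}=1$ (a sum of two products equal to $1$, which seeds nothing), and in the zero-equations (7) and (10), each of which has two summands neither of whose left factors is yet known to be a unit. Neither the cancellation property of Theorem \ref{equiv}(\ref{new}) nor any other path equation supplies invertibility of the $r_{11,*}$, so ``once all operators in play are units, substitute back'' never gets off the ground. The paper's resolution is not to make these operators units at all but to form a specific combination: add to equation (4) the zero quantities from (7) and (10), right-multiplied by $r_{6,1}^{-1}r_{6,3}$ and $r_{7,2}^{-1}r_{7,3}$ respectively. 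The $r_{11,6}$ and $r_{11,7}$ terms then double, and the two $r_{11,5}$ terms are rewritten using (5), (6), (8), (9), (11), (12) until they merge into a single term $-2\,r_{11,5}r_{8,5}^{-1}r_{8,4}r_{4,3}$, giving the identity $1=2\bigl(r_{11,7}r_{7,3}+r_{11,6}r_{6,3}-r_{11,5}r_{8,5}^{-1}r_{8,4}r_{4,3}\bigr)$. This exhibits a right inverse of $1+1$ directly (your anticipated endgame ``$(1+1)u=v$ with $u,v$ already units'' is not what occurs, since the bracketed element is not known to be a unit beforehand); Dedekind finiteness, or centrality of $2$, then makes $1+1$ a two-sided unit. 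Without this combination step --- the actual crux of the argument --- your proposal does not close.
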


\begin{proof}
Let $R$ be a Dedekind finite ring. Any solution over $R$-module (where $R$ acts faithfully)
must satisfy the following $12$ equations:

\bigskip

\noindent
(1) \quad Path from $a$ to $a$:  \quad $r_{10,4}r_{4,1}=1$

\bigskip

\noindent
(2) \quad Path from $b$ to $b$:  \quad  $r_{9,4}r_{4,2}=1$

\bigskip

\noindent
(3) \quad Path from $c$ to $c_l$: \quad $r_{8,4}r_{4,3}=1$

\bigskip

\noindent
(4) \quad Paths from $c$ to $c_r$: \quad $r_{11,7}r_{7,3}+r_{11,6}r_{6,3}=1$

\bigskip

\noindent
(5) \quad Paths from $a$ to $b$: \quad $r_{9,6}r_{6,1}+r_{9,4}r_{4,1}=0$

\bigskip

\noindent
(6) \quad Paths from $a$ to $c_l$: \quad $r_{8,5}r_{5,1}+r_{8,4}r_{4,1}=0$

\bigskip

\noindent
(7) \quad Paths from $a$ to $c_r$:  \quad $r_{11,6}r_{6,1}+r_{11,5}r_{5,1}=0$

\bigskip

\noindent
(8) \quad Paths from $b$ to $a$: \quad $r_{10,7}r_{7,2}+r_{10,4}r_{4,2}=0$

\bigskip

\noindent
(9) \quad Paths from $b$ to $c_l$: \quad $r_{8,5}r_{5,2}+r_{8,4}r_{4,2}=0$

\bigskip

\noindent
(10) \quad Paths from $b$ to $c_r$: \quad $r_{11,7}r_{7,2}+r_{11,5}r_{5,2}=0$

\bigskip

\noindent
(11) \quad Paths from $c$ to $a$: \quad $r_{10,7}r_{7,3}+r_{10,4}r_{4,3}=0$

\bigskip

\noindent
(12) \quad Paths from $c$ to $b$: \quad $r_{9,6}r_{6,3}+r_{9,4}r_{4,3}=0$

\noindent
We now use these equations and the fact that $R$ is a Dedekind finite ring so having a right inverse 
implies having an left inverse.

\bigskip

\noindent
(13) $r_{10,4}$ and $r_{4,1}$ have two sided inverses (1)

\bigskip

\noindent
(14) $r_{9,4}$ and $r_{4,2}$ have two sided inverses (2)

\bigskip

\noindent
(15)  $r_{8,4}$ and $r_{4,3}$ have two sided inverses (3)

\bigskip

\noindent
(16) $r_{9,6}r_{6,1}= - r_{9,4}r_{4,1}$ according to (5) and thus using (13) and (14) we have $r_{4,1}^{-1}r_{9,4}^{-1}(-1)r_{9,6}r_{6,1}=1$. From this we conclude that $r_{6,1}$ has a left inverse, and thus $r_{6,1}$ has a two sided inverse.

\bigskip

\noindent
(17) $r_{9,6}r_{6,1}= - r_{9,4}r_{4,1}$ (5) and thus using (13) and (14)  $r_{9,6}r_{6,1}(-1) r_{4,1}^{-1}r^{-1}_{9,4}=1$.  From this we conclude that $r_{9,6}$ has a right inverse, and thus $r_{9,6}$ has a two sided inverse.

\bigskip

\noindent
(18) We conclude that  $r_{8,5}$ has a two sided inverse. 

\bigskip

\noindent
(19) We conclude that $r_{5,1}$ has a two sided inverse.

\bigskip

\noindent
(20) $r_{10,7}$ has a two sided inverse.

\bigskip

\noindent
(21) $r_{7,2}$ has a two sided inverse.

\bigskip

\noindent
(22) $r_{9,6}$ has a two sided inverse.

\bigskip

\noindent
(23) $r_{6,3}$ has a two sided inverse.

\bigskip

\noindent
(24) According to (4), (7) and (10) and the fact $r_{6,1}$ and $r_{7,2}$ are invertible we get:
$1=(r_{11,7}r_{7,3}+r_{11,6}r_{6,3})+ (r_{11,5}r_{5,1}+r_{11,6}r_{6,1})r^{-1}_{6,1}r_{6,3} + (r_{11,5}r_{5,2}+r_{11,7}r_{7,2})r^{-1}_{7,2}r_{7,3}$

\bigskip

\noindent
(25) By expanding (24) we get:
$1=r_{11,7}r_{7,3}+r_{11,6}r_{6,3}+r_{11,6}r_{6,1}r^{-1}_{6,1}r_{6,3} + r_{11,7}r_{7,2}r^{-1}_{7,2}r_{7,3}+r_{11,5}r_{5,1}r^{-1}_{6,1}r_{6,3}+r_{11,5}r_{5,2}r^{-1}_{7,2}r_{7,3}$ 
$= 2(r_{11,7}r_{7,3}+r_{11,6}r_{6,3}) + r_{11,5}(r_{5,1}r^{-1}_{6,1}r_{6,3} + r_{5,2}r^{-1}_{7,2}r_{7,3})$

\bigskip

\noindent
(26) $r^{-1}_{6,1}= -r^{-1}_{4,1}r^{-1}_{9,4}r_{9,6}$ according to (5) and the fact that $r_{6,1},r_{4,1}$ and $r_{9,4}$ are invertible.

\bigskip

\noindent
(27) $r^{-1}_{7,2}= -r^{-1}_{4,2}r^{-1}_{10,4}r_{10,7}$ according to (8) and the fact that $r_{7,2},r_{4,2}$ and $r_{10,4}$ are invertible.

\bigskip

\noindent
(28) Substituting (26) and (27) into (25) gives:
$1= 2(r_{11,7}r_{7,3}+r_{11,6}r_{6,3}) + r_{11,5}(r_{5,1}(-r^{-1}_{4,1}r^{1}_{9,4}r_{9,6})r_{6,3} + r_{5,2} (-r^{-1}_{4,2}r^{-1}_{10,4}r_{10,7})r_{7,3}$

\bigskip

\noindent
(29) According to  (12) $r_{9,6}r_{6,3} = - r_{10,7}r_{7,3}$

\bigskip

\noindent
(30) According to  (11) $r_{10,7}r_{7,3}= - r_{10,4}r_{4,3}$

\bigskip

\noindent
(31) Inserting (29) and (30) into (28) we get:
$1=2(r_{11,7}r_{7,3}+r_{11,6}r_{6,3}) + r_{11,5}(r_{5,1}r^{-1}_{4,1}r^{-1}_{9,4}r_{9,4}r_{4,3} + 
r_{5,2} r^{-1}_{4,2}r^{-1}_{10,4}r_{10,4}r_{4,3}=2(r_{11,7}r_{7,3}+r_{11,6}r_{6,3}) + r_{11,5}(r_{5,1}r^{-1}_{4,1} + r_{5,2} r^{-1}_{4,2}) r_{4,3}$

\bigskip

\noindent
(32) $r_{5,1}= -r^{-1}_{8,5}r_{8,4}r_{4,1}$ according to (6)

\bigskip

\noindent
(33) $r_{5,2}= -r^{-1}_{8,5}r_{8,4}r_{4,2}$ according to (9)

\bigskip

\noindent
(34) Substituting (32) and (33) into (31) we get
$1=2(r_{11,7}r_{7,3}+r_{11,6}r_{6,3}) + r_{11,5}((-r^{-1}_{8,5}r_{8,4}r_{4,1}r^{-1}_{4,1}  - r^{-1}_{8,5}r_{8,4}r_{4,2} r^{-1}_{4,2}) r_{4,3}$

\bigskip

\noindent
(35) Reducing (34) we finally get:
$1=2(r_{11,7}r_{7,3}+r_{11,6}r_{6,3} - r_{11,5}r^{-1}_{8,5}r_{8,4}r_{4,3})$

\bigskip

\noindent
This shows that if the equations (1)-(12) has a solution over a Dedekind finite ring $R$, $1+1=2$ must be invertible in $R$.

On the other hand we claim that the equations (1)-(12) are solvable in any ring $R$ where $1+1$ is invertible.

\bigskip

\includegraphics[scale=0.5]{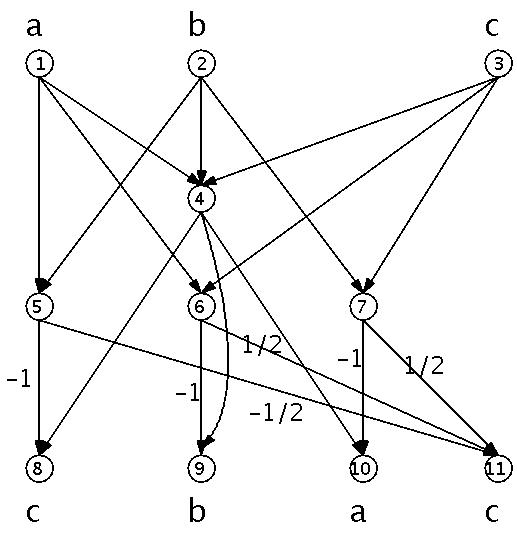} \label{analoguesol} 

\bigskip

A general solution that is valid over any ring $R$ where $2=1+1$ is invertible is indicated in figure \ref{analoguesol}. Explicitly a solution is given by letting $r_{4,1}=r_{5,1}=r_{6,1}=r_{4,2}=r_{5,2}=r_{7,2}=r_{4,3}=r_{6,3}=r_{7,3}=r_{8,4}=r_{9,4}=r_{4,10}=1$, letting $r_{8,5}=r_{9,6}=r_{10,7}=-1$ and letting $r_{11,6}=r_{11,7}= -r_{11,5}= \frac{1}{2}$ where $\frac{1}{2}$ denote the multiplicative inverse of $2$.

\end{proof}

\begin{corollary}
The network $N$ can be solved over any ring $R$ of operators that acts on analogue signals.  The network $N$ is unsolvable over any ring $R$ (that is Dedekind finite) that acts on digital (binary) signals.
\end{corollary}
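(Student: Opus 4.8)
The plan is to obtain the corollary as a direct specialisation of Theorem~\ref{th5}, once the two relevant classes of operator rings are identified with the two hypotheses of that theorem. So the only real work is to check that "rings of operators acting on analogue signals" is exactly the class of rings in which $1+1$ is invertible, and that "rings of operators acting on binary signals" lands inside the class of Dedekind finite rings in which $1+1$ is not invertible; after that, no further computation is needed.

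First I would treat the analogue case. In a space of real- or complex-valued signals the scalar $\frac{1}{2}$ is itself a legitimate operator and is a two-sided inverse of $2=1+1$; equivalently, the defining feature of analogue signals (that $f+f=0$ only when $f=0$, together with divisibility by $2$) forces $2$ to be a unit of $R$. Hence the first half of Theorem~\ref{th5} applies verbatim, and the explicit assignment exhibited there (all edge operators $\pm 1$ except the wing operators $r_{11,6}=r_{11,7}=-r_{11,5}=\frac{1}{2}$, as in figure~\ref{analoguesol}) solves the network $N$. I would emphasise that this direction needs nothing about Dedekind finiteness: it holds over \emph{every} ring in which $2$ is a unit. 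Next, for the digital case: if $R$ acts faithfully on binary strings then $\mathrm{bit}(j)+\mathrm{bit}(j)=0$ in every coordinate, so $1+1=0$ in $R$ by faithfulness, and $0$ is never invertible in a nonzero ring; thus $1+1$ fails to be invertible. Restricting to Dedekind finite $R$ — the class we have argued consists of the "legitimate" operator rings — the hypotheses of the second half of Theorem~\ref{th5} are met, and hence $N$ is unsolvable over $R$.

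The step I expect to need the most care is the analogue half, specifically making the modelling convention explicit: "operators acting on analogue signals" must be read as including availability of multiplication by $\frac{1}{2}$ (so that $2\in R$ is a unit), not merely the weaker torsion-freeness $1+1\neq 0$. If one assumes only $1+1\neq 0$, Theorem~\ref{th5} no longer guarantees solvability, and the corollary's first assertion would be false in general. I would therefore fix this convention once at the start of the argument, after which both sentences of the corollary follow immediately from Theorem~\ref{th5}.
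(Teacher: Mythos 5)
Your proposal is correct and follows essentially the same route as the paper: use faithfulness of the action together with the availability of the halving operator to conclude that $1+1$ is a unit of $R$ in the analogue case, use $b+b=0$ plus faithfulness to conclude $1+1=0$ (hence not invertible) in the digital case, and then invoke Theorem~\ref{th5} for both directions. Your explicit caveat that the analogue convention must supply $2^{-1}$ (not merely $1+1\neq 0$) is exactly the assumption the paper makes when it posits doubling and halving operators.
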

\begin{proof}
Assume $M$ is a space of analogue signals (messages). We assume such signals have amplitudes (e.g. reals or complex numbers) and that we can double and half amplitudes. Thus the space $R$ of linear operators that contain the operator $2$ (doubling the amplitude) as well as its inverse  $2^{-1}$ (halfing the amplitude).  
Now, $R$ acts faithfully so $m= 2^{-1} (m+m)=2^{-1}2 m=1m$ and it follows that the doubling and the halving operators indeed are $2$ and $2^{-1}$ which satisfied $2^{-1}2=1$.

Next assume that $M$ is a space of digital (binary) signals.  More specifically, any $b \in M$ is a string (possible infinite) of binary symbols with $b+b=0$. Thus $2b=b+b=0$ for all $b \in M$, and since $R$ acts faithfully it follows that $2=1+1=0$ fail to be invertible.  
\end{proof}

\section{Analysis of Network Capacity} 

\subsection{Definitions and methods}

In the general case where $R$ is stably finite its not obvious that its possible to define the capacity (reciprocal to bandwidth) of a network in a proper manner.  To make the definition meaningful  we will strengthen the assumption on the ring $R$ from being stably finite (or Dedekind finite) and assume that $R$ is a finite dimensional matrix ring ${\it GL}_n (F)$ over some field $F$ (finite or infinite). In fact we will assume $R$ is a field, but notice that the definition of network capacity ensures that the capacity of a network with regards to a field $F$ is the same as the capacity with regards to any finite dimensional matrix ring over $F$.  With this restriction we can now define the capacity of a communication network as in \cite{capacity}.

\begin{definition}
Assume $F$ is a field and that $k,n \in \{1,2,3,....\}$. We say that a network has a solution that achieve capacity $\frac{k}{n}$ (or use bandwidth $\frac{n}{k}$) if there is a solution where each sender edge is assigned a $n \times k$ $R$-matrix (i.e. a matrix with entries in $R$),
each edge ending in a receiver node is assigned a $k \times n$ $R$-matrix, and each inner edge is assigned a $n \times n$ $R$-matrix.  The capacity of the communication network $N$ over a field $F$ is given by 
\[ {\rm capacity}(N,F) = {\rm sup}\{ \frac{k}{n} :  N {\rm  \ has \ a \ solution \ over \ } F {\rm \ that \ has \ capacity \ } 
\frac{k}{n} \}\] 
\end{definition}

\subsection{Simple example}\label{simple}
Let's first consider a rather trivial example that illustrate the idea of capacity of a communication network.

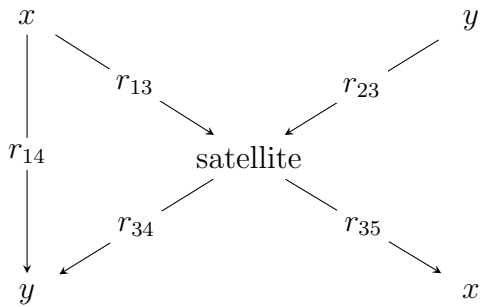
\begin{figure}[H] 
\centering
\begin{tikzpicture}[descr/.style={fill=white,inner sep=2.5pt}] 
  \matrix (m) [matrix of math nodes,row sep=3em,column sep=4em,minimum width=2em] {
     x & &  y \\
        &  {\rm satellite}  &  \\
     y &  &   x \\};
  \path[-stealth]
(m-1-1) edge node [descr] {$r_{13}$} (m-2-2)
(m-2-2) edge node [descr] {$r_{34}$} (m-3-1)  
(m-1-3) edge node [descr] {$r_{23}$} (m-2-2)
(m-1-1) edge node[descr] {$r_{14}$} (m-3-1)
(m-2-2) edge node [descr] {$r_{35}$}(m-3-3);       
\end{tikzpicture}
\caption{Simple example}
\label{figsimple}
\end{figure}

\bigskip

\noindent

\begin{proposition}
The satellite communication problem in figure \ref{figsimple} has capacity $c=\frac{1}{2}=0.5$. Equivalently, the problem can be solved if and only if the bandwidth of the communication to and from the satellite has bandwidth at least $2$.
\end{proposition}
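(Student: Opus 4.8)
The plan is to establish the two bounds ${\rm capacity}(N,F)\ge \tfrac12$ and ${\rm capacity}(N,F)\le \tfrac12$ and then restate the result in bandwidth form. I work over a field $F$; as noted above this is harmless, since the capacity over $F$ equals the capacity over any finite-dimensional matrix ring over $F$. I use the bottleneck description of Figure \ref{figsimple}: the satellite holds one broadcast content $m\in F^{n}$, namely the sum of the two incoming edge signals, and each receiver reads the sum of the signals on its incoming edges, so that the edge operators carry out all coding and decoding, exactly as in the butterfly networks of Figures \ref{fig02} and \ref{fig2}.

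\emph{Achievability.} I would write down an explicit capacity-$\tfrac12$ solution, i.e.\ take $k=1$ and $n=2$. Let the edge $1\to 3$ carry $x\mapsto(x,0)$ and the edge $2\to 3$ carry $y\mapsto(0,y)$, so the satellite content is $m=(x,y)\in F^{2}$; let the edge from the satellite to the receiver of $y$ apply $(a,b)\mapsto b$, let the edge from the satellite to the receiver of $x$ apply $(a,b)\mapsto a$, and let the direct edge from node $1$ to the receiver of $y$ carry the zero map. Then the receiver of $y$ reads $y$ and the receiver of $x$ reads $x$, so this is a valid solution; taking $k$ independent copies and padding with zeros yields a solution for every $n\ge 2k$.

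\emph{Upper bound.} Suppose a solution of capacity $\tfrac{k}{n}$ is given. Let $P,Q$ be the $n\times k$ operators on the edges $1\to3$ and $2\to3$, so the satellite content is $Px+Qy$, and let $D$ (resp.\ $E$) be the $k\times n$ operator on the edge from the satellite to the receiver of $y$ (resp.\ of $x$). The direct edge into the receiver of $y$ carries a function of $x$ alone, so putting $x=0$ in the decoding identity there gives $DQ=I_{k}$; the decoding identities at the receiver of $x$ give $EP=I_{k}$ and $EQ=0$. From $DQ=I_{k}$ and $EP=I_{k}$ the column spaces ${\rm col}(P)$ and ${\rm col}(Q)$ are each $k$-dimensional over $F$; moreover $EP=I_{k}$ forces ${\rm col}(P)\cap\ker E=\{0\}$, while $EQ=0$ gives ${\rm col}(Q)\subseteq\ker E$, so ${\rm col}(P)\cap{\rm col}(Q)=\{0\}$ and ${\rm col}(P)\oplus{\rm col}(Q)$ is a $2k$-dimensional subspace of $F^{n}$. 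Hence $n\ge 2k$, i.e.\ $\tfrac{k}{n}\le\tfrac12$. Together with achievability this gives ${\rm capacity}(N,F)=\tfrac12$, and since a capacity-$\tfrac{k}{n}$ solution uses bandwidth $\tfrac{n}{k}$, this is exactly the statement that the problem is solvable if and only if the bandwidth to and from the satellite is at least $2$.

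The one point needing care --- and the only real obstacle --- is the status of the edge from node $1$ to the receiver of $y$, which is simultaneously a sender edge and a receiver edge: whatever dimensions one assigns it, it conveys to that receiver only a linear function of $x$, hence can be used to cancel the stray $Px$-term but contributes nothing toward reconstructing $y$, which is why the essential constraint there is merely $DQ=I_{k}$. It is also worth flagging that the upper bound genuinely uses the broadcast nature of the satellite, i.e.\ a single content $m\in F^{n}$ feeding both out-edges; were the satellite free to transmit two independent $F^{n}$-valued functions of $(x,y)$ on its two out-edges, bandwidth $1$ would already suffice, so the ``satellite'' restriction is doing real work here.
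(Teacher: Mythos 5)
Your proof is correct and follows essentially the same route as the paper: the same explicit $k=1$, $n=2$ solution for achievability, and the same three constraints ($EP=I_k$, $DQ=I_k$, $EQ=0$, with the direct edge $1\to4$ correctly set aside as irrelevant) for the converse. The only cosmetic difference is that you derive $n\ge 2k$ by exhibiting the $2k$-dimensional direct sum ${\rm col}(P)\oplus{\rm col}(Q)\subseteq F^{n}$, whereas the paper applies the Sylvester rank inequality ${\rm rank}(r_{3,5})+{\rm rank}(r_{2,3})-n\le{\rm rank}(r_{3,5}r_{2,3})=0$; these are the same estimate.
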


\begin{proof}
This communication has a solution of capacity $\frac{k}{n}$ if the following system (1)-(4) of matrix equations has a solution:

\bigskip

\noindent
$(1)$ \quad $r^{k,k}_{1,4}+r^{k,n}_{3,4}r^{n,k}_{1,3}=0^{k,k}$.

\bigskip

\noindent
$(2)$ \quad $r^{k,n}_{3,5}r^{n,k}_{1,3}=1^{k,k}$.

\bigskip

\noindent
$(3)$ \quad $r^{k,n}_{3,4}r^{n,k}_{2,3}=1^{k,k}$.

\bigskip

\noindent
$(4)$ \quad $r^{k,n}_{3,5}r^{n,k}_{2,3}=0^{k,k}$.

\bigskip

\noindent
For $n=2$ and $k=1$ we have the following solution:
\[ r^{1,1}_{1,4}= \begin{bmatrix} 0  \end{bmatrix}, \quad r^{1,2}_{3,4} =  \begin{bmatrix} 0 &  1  \end{bmatrix}, \quad
r^{2,1}_{1,3}= \begin{bmatrix} 1 \\ 0 \end{bmatrix}, \quad r^{1,2}_{3,5}= \begin{bmatrix} 1 &  0  \end{bmatrix}, \quad
r^{2,1}_{2,3}=\begin{bmatrix} 0 \\ 1 \end{bmatrix}\]
This shows that  $c \geq \frac{1}{2}$.

\bigskip

\noindent
For the upper bound we proceed as follows: 
From (2) we conclude that \[k={\rm rank(1^{k,k})}={\rm rank}(r^{k,n}_{3,5}r^{n,k}_{1,3}) \leq  {\rm min}({\rm rank}(r^{k,n}_{3,5}), {\rm rank}(r^{n,k}_{1,3}))\]
Since $k \leq n$ it follows that ${\rm rank}(r^{k,n}_{3,5}) \leq k$ and ${\rm rank}(r^{n,k}_{1,3})\leq k$. From this we conclude that ${\rm rank}(r^{k,n}_{3,5})= {\rm rank}(r^{n,k}_{1,3})= k$.

\bigskip

\noindent
From (3) we conclude that \[k={\rm rank(1^{k,k})}={\rm rank}(r^{k,n}_{3,4}r^{n,k}_{2,3}) \leq  {\rm min}({\rm rank}(r^{k,n}_{3,4}), {\rm rank}(r^{n,k}_{2,3}))\]
Since $k \leq n$ it follows that ${\rm rank}(r^{k,n}_{3,4}) \leq k$ and ${\rm rank}(r^{n,k}_{2,3})\leq k$. From this we conclude that ${\rm rank}(r^{k,n}_{3,4})= {\rm rank}(r^{n,k}_{2,3})= k$.

\bigskip

\noindent
From (4) we conclude that \[2k-n={\rm rank}(r^{k,n}_{3,5})+{\rm rank}(r^{n,k}_{2,3}) - n \leq {\rm rank}(r^{k,n}_{3,5}r^{n,k}_{2,3})={\rm rank}(0^{k,k})=0\]
i.e. $n \geq 2k$ and thus that $c \leq \frac{1}{2}$. 
\end{proof}

Notice we did not use condition (1) so the network has $c=\frac{1}{2}$ with or without an edge from node $1$ to node $4$.

\section{Digital communication can achieve higher capacity  than analogue communication}

\subsection{Basic considerations (digital versus analogue)}
Let us revisit the communication network in figure \ref{figure6}.  The communication network has capacity $\frac{k}{n}$ over the field $F$ if the network has a solution where each edge from a sender mode is assigned a $n \times k$ $F$-matrix,
each edge ending in a receiver node is assigned a $k \times n$ $F$-matrix, and each inner edge is assigned a $n \times n$ $F$-matrix.  We want to lower and upper bound the capacity given by 
\[ {\rm capacity}(N,F) = {\rm sup}\{ \frac{k}{n} :  N {\rm  \ has \ a \ solution \ over \ } F {\rm \ that \ has \ capacity \ } 
\frac{k}{n} \}\] 

\begin{proposition}
The communication network in figure \ref{figure6} has capacity at least $\frac{3}{4}=0.75$, and at most $1$. 
\end{proposition}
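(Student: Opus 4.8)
The plan is to prove the two bounds by rather different means: the upper bound ${\rm capacity}(N,F)\le 1$ by a single cut-edge argument, and the lower bound ${\rm capacity}(N,F)\ge \tfrac34$ by combining four scalar sub-solutions, one per coordinate, via an MDS outer code.

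\textbf{Upper bound.} From the twelve path-equations in the proof of Theorem~\ref{th4} one reads off that in the DAG of Figure~\ref{figure6} there is a \emph{unique} directed path from the source emitting $x$ (node $1$) to the receiver node $11$ that must recover $x$, namely $1\to4\to8\to11$; this is exactly why equation $(4)$ is the single monomial $r_{8,11}r_{4,8}r_{1,4}=1$. In a rate-$k/n$ solution the edge $1\to4$ carries $r_{1,4}x$ with $r_{1,4}$ an $n\times k$ matrix, and the only other edge into node $11$, namely $3\to11$, carries a function of $z$ alone; hence the entire $x$-dependence of what node $11$ sees is through the matrix $r_{1,4}$, so $r_{1,4}$ must be injective. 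This forces $n\ge k$, i.e.\ $k/n\le 1$, and taking the supremum gives ${\rm capacity}(N,F)\le 1$. (The unique path $1\to6\to9$ to the other $x$-receiver works equally well.)

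\textbf{Lower bound.} The key point is that the obstruction of Theorem~\ref{th4}, which ultimately forces $2=0$ (cf.\ step $(36)$), disappears the moment \emph{any one} of the four receiver demands is dropped. Concretely, for each receiver node $r\in\{7,9,10,11\}$ I would take the path-equation system of Theorem~\ref{th4} with the three equations belonging to $r$ deleted and exhibit a solution with every edge operator in $\{+1,-1\}$, valid over $\mathbb Z$ and hence over every field $F$; call these $\sigma^{(1)},\dots,\sigma^{(4)}$, with receivers indexed $r_1,\dots,r_4$. Fix the $[4,3]$ parity code $\iota\colon F^{3}\hookrightarrow F^{4}$, $\iota(v)=(v_1,v_2,v_3,v_1+v_2+v_3)$, which is MDS over every field: any three of the four coordinates of $\iota(v)$ determine $v$. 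Now assemble a rate-$\tfrac34$ solution by putting on each inner edge $a\to b$ the $4\times4$ diagonal matrix with $i$-th entry $\sigma^{(i)}_{a\to b}$, on each source edge that diagonal matrix precomposed with $\iota$ (a $4\times3$ matrix), and on each edge into receiver $r_i$ that diagonal matrix postcomposed with the MDS-decoder that reconstructs $v$ from the three coordinates $j\ne i$ (a $3\times4$ matrix). By linearity the $i$-th coordinate everywhere evolves exactly as in $\sigma^{(i)}$, so at receiver $r_i$ the coordinates $j\ne i$ equal $\iota(m)_j$ with \emph{no} residual interference (because $\sigma^{(j)}$ does not drop $r_i$ when $j\ne i$) and the decoder recovers the demanded $m\in F^{3}$; this yields ${\rm capacity}(N,F)\ge\tfrac34$. (The direct edge $3\to11$ gets the obvious $3\times3$ composite, or one subdivides it by a dummy inner node to stay within the letter of the definition.)

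\textbf{Main obstacle.} Nothing here is deep, but the load-bearing step is the claim that \emph{each} of the four ``drop one demand'' subsystems is solvable over every field; a priori one of them might still carry the relation that forces $2=0$, and then the diagonal construction would fail. So the genuine work is four sign-chasings in the style of $(13)$--$(37)$ in the proof of Theorem~\ref{th4}, verifying in each case that the remaining nine equations are simultaneously satisfiable with $\pm1$ entries. The value $\tfrac34$ is then simply the fraction of the four coordinates ($3$ of $4$) that carry a given receiver's message, and it is this $3$-out-of-$4$ covering structure — together with the existence of a matching MDS code over an arbitrary field — that makes $\tfrac34$ the natural target.
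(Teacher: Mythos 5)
Your route is sound and genuinely different from the paper's on the lower bound. The paper's proof simply exhibits seventeen explicit computer-found matrices (of sizes $4\times3$, $4\times4$, $3\times4$), loosely described as ``timesharing'', and leaves the reader to verify them against the twelve matrix path-equations; your construction makes the timesharing structure explicit: four scalar sub-solutions $\sigma^{(1)},\dots,\sigma^{(4)}$, each serving all receivers but one, glued diagonally and wrapped in the $[4,3]$ single-parity MDS code. The gluing step is correct as you describe it: the decoder $D_i$ (a $3\times4$ matrix with zero $i$-th column and $D_i\iota=1^{3,3}$) is the same on every edge into receiver $r_i$, so it factors out of the sum over in-edges, and the coordinates $j\neq i$ of the vector it acts on equal $\iota(m)_j$ for the demanded message $m$ with no interference, since $\sigma^{(j)}$ satisfies all of $r_i$'s demands for $j \neq i$. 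Your upper-bound argument (equation $(4)$ is the single monomial $r^{k,n}_{8,11}r^{n,n}_{4,8}r^{n,k}_{1,4}=1^{k,k}$, so $r^{n,k}_{1,4}$ has a left inverse and $k\le n$) is the standard rank argument and is, if anything, more complete than the paper's, which invokes Theorem \ref{th4} to exclude a solution at $k=n$ but does not explicitly rule out $k>n$. What each approach buys: the paper's is a finished certificate that only needs checking; yours explains \emph{why} $3/4$ is the natural target (a $3$-out-of-$4$ covering of receivers plus an MDS code over an arbitrary field) and transfers immediately to any field.

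The one load-bearing claim you do not discharge --- and correctly flag --- is that each of the four drop-one-receiver systems admits a $\pm1$ solution over $\mathbb Z$; if even one of them still forced $2=0$, the diagonal construction would collapse. The claim is in fact true, and the four sign-chasings are short. For example, dropping receiver $10$ (i.e.\ deleting equations $(2)$, $(6)$, $(10)$), the assignment $r_{2,5}=r_{3,5}=r_{4,7}=-1$ and all other fourteen edge operators equal to $+1$ satisfies the remaining nine equations: the three unit conditions $(1)$, $(4)$, $(11)$ are immediate, and each of $(3)$, $(5)$, $(7)$, $(8)$, $(9)$, $(12)$ becomes $1-1=0$. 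The other three cases (dropping receiver $7$, $9$ or $11$) admit similar $\pm1$ assignments; note that when receiver $11$ is dropped, equation $(6)$ can no longer be reduced via $(8)$, so its three $\pm1$ terms must be chosen to sum to $1$ (two of them $+1$, one $-1$), which is achievable. Once these four assignments are written down, your proof is complete and at that point is no less rigorous than the paper's, whose matrices are likewise asserted and ``compute checked'' rather than derived.
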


\begin{proof}[Proof (stating the problem)] More specifically the network has a solution of capacity $\frac{k}{n}$ if the matrix equations (1)-(12) below  can be solved simultaneously. Lower and upper bounds on  the capacity of the communication network depend on the solvability of the the matrix equations (1)-(12) for various choices of $k$ and $n$. 
\bigskip

\noindent
$(1) \quad $ paths from $i_x$ to $o_x: \quad r^{k,n}_{6,9}r^{n,k}_{1,6}=1^{k,k}$

\bigskip

\noindent
$(2) \quad $ paths from $i_x$ to $o_y: \quad r^{k,n}_{8,10}r^{n,n}_{4,8}r^{n,k}_{1,4}+r^{k,n}_{6,10}r^{n,k}_{1,6}=0^{k,k}$

\bigskip

\noindent
$(3) \quad $ paths from $i_x$ to $o_z: \quad r^{k,n}_{4,7}r^{n,k}_{1,4}+r^{k,n}_{6,7}r^{n,k}_{1,6}=0^{k,k}$

\bigskip

\noindent
$(4) \quad $ paths from $i_x$ to $\bar{o}_{x}: \quad r^{k,n}_{8,11}r^{n,n}_{4,8}r^{n,k}_{1,4}=1^{k,k}$

\bigskip

\noindent
$(5) \quad $ paths from $i_y$ to $o_x: \quad r^{k,n}_{5,9}r^{n,k}_{2,5}+r^{k,n}_{6,9}r^{n,k}_{2,6}=0^{k,k}$

\bigskip

\noindent
$(6) \quad $ paths from $i_y$ to $o_y: \quad r^{k,n}_{8,10}r^{n,n}_{4,8}r^{n,k}_{2,4}+r^{k,n}_{8,10}r^{n,n}_{5,8}r^{n,k}_{2,5}+r^{k,n}_{6,10}r^{n,k}_{6,2}=1^{k,k}$

\bigskip

\noindent
$(7) \quad $ paths from $i_y$ to $o_z:  \quad r^{k,n}_{4,7}r^{n,k}_{2,4}+r^{k,n}_{6,7}r^{n,k}_{2,6}=0^{k,k}$

\bigskip

\noindent
$(8) \quad $ paths from $i_y$ to $\bar{o}_{x}: \quad r^{k,n}_{8,11}r^{n,n}_{4,8}r^{n,k}_{2,4}+r^{k,n}_{8,11}r^{n,n}_{5,8}r^{n,k}_{2,5}=0^{k,k}$

\bigskip

\noindent
$(9) \quad $ paths from $i_z$ to $o_x: \quad r^{k,n}_{5,9}r^{n,k}_{3,5}+r^{k,n}_{6,9}r^{n,k}_{3,6}=0^{k,k}$

\bigskip

\noindent
$(10) \quad $ paths from $i_z$ to $o_y: \quad r^{k,n}_{6,10}r^{n,k}_{3,6}+r^{k,n}_{8,10}r^{n,n}_{5,8}r^{n,k}_{3,5}=0^{k,k}$

\bigskip

\noindent
$(11) \quad $ paths from $i_z$ to $o_z: \quad r^{k,n}_{6,7}r^{n,k}_{3,6}=1^{k,k}$

\bigskip

\noindent
$(12) \quad $ paths from $i_z$ to $\bar{o}_{x}: \quad r^{k,n}_{8,11}r^{n,n}_{5,8}r^{n,k}_{3,5}+ r^{k,k}_{3,11}=0^{k,k}$

\bigskip

\end{proof}

\begin{proof}[Proof (lower bound)] For the lower bound we construct a solution for $n=4$ and $k=3$. The matrix solution was non-trivial to construct, but can be checked by hand.  The main idea behind the construction was to construct a solution based on "timesharing". In the solution $n$ represents $4$ time slots, and each message is split into $3$ parts.  After, extensive calculations (compute checked) we found the following matrix solution that shows that the capacity of the communication network is at least $\frac{3}{4}$:

\[r^{4,3}_{1,4}= \begin{bmatrix} 1 & 0 & 0  \\ 0 & 1 & 0  \\ 0 & 0 & 1 \\ 1 & 0 & 0  \end{bmatrix}, \quad r^{4,3}_{1,6} =  \begin{bmatrix} 1 &  0 & 0 \\ 0 & 1 & 0 \\ 0 & 0 & 1 \\ 1 & 0 & 0  \end{bmatrix}, \quad
r^{4,3}_{2,4}= \begin{bmatrix} 1 & 0 & 0 \\ 0 & 0 & 0 \\ 0 & 0 & 0 \\ 0 & 0 & 1 \end{bmatrix}, \quad r^{4,3}_{2,5}= \begin{bmatrix} -1 &  0 & 0 \\ 0 & 1 & 0 \\ 0 & 0 & 0 \\ 0 & 0 & 1  \end{bmatrix}, \quad
r^{4,3}_{2,6}=\begin{bmatrix} 1 & 0 & 0 \\ 0 & 1 & 0 \\ 0 & 0 & 0 \\ 0 & 0 & 1 \end{bmatrix}\]

\[r^{4,3}_{3,5}=\begin{bmatrix} 1 &  0 & 0 \\ 0 & 0 & 0 \\ 0 & 1 & 0 \\ 0 & 0 & 1  \end{bmatrix}, 
\quad r^{4,3}_{3,6}=\begin{bmatrix} 1 &  0 & 0 \\ 0 & 0 & 0 \\ 0 & 1 & 0 \\ 0 & 0 & 1  \end{bmatrix}, \quad
r^{3,3}_{3,11}=\begin{bmatrix} -1 &  0 & 0 \\ 0 & 0 & 0 \\ 0 & 0 & 0  \end{bmatrix}, 
\quad  r^{4,3}_{4,7}=\begin{bmatrix} -1 &  0 & 0 & 0 \\ 0 & 0 & -1 & 0 \\ 0 & 0 & 0 & -1  \end{bmatrix} \]

\[r^{4,4}_{4,8}=\begin{bmatrix} 1 &  0 & 0 & 0  \\ 0 & 1 & 0 & 0 \\ 0 & 0 & 1 & 0 \\ 0 & 0 & 0 & 1 \end{bmatrix}, \quad r^{4,4}_{5,8}=\begin{bmatrix} 1 &  0 & 0 & 0  \\ 0 & 0 & 0 & 0 \\ 0 & 0 & 0 & 0 \\ 0 & 0 & 0 & 1 \end{bmatrix}, \quad 
r^{3,4}_{5,9}=\begin{bmatrix} 0 &  0 & 0 & -1 \\ 0 & -1 & 0 & 0 \\ 0 & 0 & -1 & 0  \end{bmatrix}, \quad
r^{3,4}_{6,7}= \begin{bmatrix} 1 &  0 & 0 & 0 \\ 0 & 0 & 1 & 0 \\ 0 & 0 & 0 & 1  \end{bmatrix} \]

\[r^{3,4}_{6,9}=\begin{bmatrix} 0 &  0 & 0 & 1 \\ 0 & 1 & 0 & 0 \\ 0 & 0 & 1 & 0  \end{bmatrix}, \quad 
r^{3,4}_{6,10}=\begin{bmatrix} 1 &  0 & 0 & 0 \\ 0 & 1 & 0 & 0 \\ 0 & 0 & 0 & -1  \end{bmatrix}, \quad
r^{3,4}_{8,10}=\begin{bmatrix} -1 &  0 & 0 & 0 \\ 0 & -1 & 0 & 0 \\ 0 & 0 & 0 & 1  \end{bmatrix} \quad
r^{3,4}_{8,11}=\begin{bmatrix} 1 &  0 & 0 & 0 \\ 0 & 1 & 0 & 0 \\ 0 & 0 & 1 & 0  \end{bmatrix} \]

This solution shows that the capacity of the communication network is at least $\frac{3}{4}=0.75$. 
\end{proof}

\begin{proof}[Proof (upper bound)] The upper bound of $1$ follows from the fact that each matrix ring $M_{k}(F)$ is Dedekind finite with $1^{k,k}+1^{k,k} \neq 0^{k,k}$. Applying Theorem \ref{th4} to this fact shows that the network has no analogue solution for $k=n$. 
\end{proof}

\section{Analogue communication can achieve higher capacity  than digital communication}

\subsection{Basic considerations (Analogue versus digital)}

We have already seen that communication network in figure \ref{analogue} is only solvable over Dedekind finite rings where $1+1$ is invertible. In the section we want to quantify this difference. For the definition of capacity to be well defined we only consider the case where the underlying ring is a matrix ring $GL_{F,m}$ of $m \times m$ matrices with elements in the field $F$.  

\begin{proposition}
The communication network in figure \ref{analogue} has capacity at least $\frac{3}{4}=0.75$, and at most $1$.
\end{proposition}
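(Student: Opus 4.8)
The plan is to sandwich ${\rm capacity}(N,F)$ between $\tfrac34$ and $1$, with both bounds holding over every field $F$. First I would write the twelve path equations from the proof of Theorem \ref{th5} in matrix form for a rate-$\tfrac{k}{n}$ solution. The network of figure \ref{analogue} has no inner edges, so each of its nine sender edges carries an $n\times k$ matrix, each of its nine receiver edges a $k\times n$ matrix, and equations $(1)$--$(12)$ become identities between $k\times k$ matrices over $F$; for instance $(1)$ reads $r^{k,n}_{10,4}r^{n,k}_{4,1}=1^{k,k}$ and $(4)$ reads $r^{k,n}_{11,7}r^{n,k}_{7,3}+r^{k,n}_{11,6}r^{n,k}_{6,3}=1^{k,k}$. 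The point of passing to rate $\tfrac34$, rather than rate $1$ (which Theorem \ref{th5} forbids in characteristic $2$), is that it buys enough room to avoid inverting $2$.

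For the lower bound I would take $k=3$, $n=4$ and produce the matrices by a timesharing/message-splitting scheme: read the four coordinates as four channel uses, split each of $a,b,c$ into three parts, reserve enough coordinates for plain forwarding, and use the remaining room to deliver $c$ at the receiver $c_r$. In the honest characteristic-$\neq 2$ solution of Theorem \ref{th5}, $c_r$ recovers $c$ as $\tfrac12\bigl((a{+}c)+(b{+}c)-(a{+}b)\bigr)$; the timesharing solution has to realise the analogous decoding without ever halving a coordinate. Concretely this reduces to exhibiting explicit matrices $r^{4,3}_{4,1},\dots,r^{3,4}_{11,7}$ with entries in $\{0,\pm 1\}$ and verifying the twelve $3\times3$ identities, which is routine and machine-checkable. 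Since every entry lies in the prime subfield, the same solution works over every field, so ${\rm capacity}(N,F)\ge\tfrac34$ in all characteristics.

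For the upper bound, equation $(1)$ alone suffices. Over a field the rank of a product is at most the minimum of the two ranks, so $r^{k,n}_{10,4}r^{n,k}_{4,1}=1^{k,k}$ forces $k={\rm rank}(1^{k,k})\le\min({\rm rank}(r^{k,n}_{10,4}),{\rm rank}(r^{n,k}_{4,1}))\le\min(k,n)$, hence $k\le n$; so every solution has rate $k/n\le1$ and ${\rm capacity}(N,F)\le1$. Alternatively, restricting the message space to an $n$-dimensional subspace turns any solution with $k\ge n$ into a rate-$1$ solution over the Dedekind finite ring $M_n(F)$, so by Theorem \ref{th5} the value $1$ is in fact not attained when $F$ has characteristic $2$; but it is the rank argument that gives $\le 1$ uniformly.

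The main obstacle is the explicit rate-$\tfrac34$ construction in characteristic $2$: the characteristic-$\neq2$ solution gives essentially no guidance there, since its only nontrivial operators are the halving maps, which vanish over $\mathbb{F}_2$, so one has to design (or machine-search for) the twelve matrices solving the bilinear system $(1)$--$(12)$ and then verify them by hand. A small preliminary point, used in both parts, is to read off from figure \ref{analogue} that the network has no inner edges, so that the equation set-up is exactly $(1)$--$(12)$ in matrix form and, in particular, $r_{10,4}$ really is a receiver edge (a $k\times n$ matrix), which is what makes the rank argument go through.
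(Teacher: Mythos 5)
Your overall strategy matches the paper's where it matters most: the lower bound is obtained from an explicit $k=3$, $n=4$ timesharing solution of the twelve matrix equations, which is exactly the paper's route. The difference is that the paper actually exhibits the eighteen matrices $r^{4,3}_{4,1},\dots,r^{3,4}_{11,7}$ (with entries in $\{0,1\}$) and states they were computer-checked, whereas you only describe how one would search for them. Since the entire content of the lower bound is the existence of such a solution, deferring it to a machine search leaves the bound unestablished; you candidly flag this as the main obstacle, but it is still the one genuine gap in the proposal. A smaller slip: the inference that "every entry lies in the prime subfield, so the same solution works over every field" is a non sequitur --- the matrices are \emph{defined} over every field, but whether the twelve identities \emph{hold} depends on the characteristic (they may exploit $1+1=0$), and indeed the paper only claims its solution is valid over rings with $1+1=0$, which is all this section's proposition needs.

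For the upper bound you take a genuinely different route. You argue by rank from equation $(1)$: $k=\mathrm{rank}(1^{k,k})\le\min\bigl(\mathrm{rank}(r^{k,n}_{10,4}),\mathrm{rank}(r^{n,k}_{4,1})\bigr)\le\min(k,n)$, hence $k\le n$ and every solution has rate at most $1$. The paper instead invokes Theorem \ref{th5} together with the Dedekind finiteness of $M_{k}(F)$ to conclude there is no solution with $k=n$. Your argument is cleaner and in fact stronger: it excludes all $k>n$ uniformly over every field, whereas the paper's appeal to Theorem \ref{th5} only addresses the case $k=n$ (and only in characteristic $2$), so strictly speaking it rules out attaining rate $1$ rather than exceeding it. This part of your proposal is correct and, if anything, an improvement; it also mirrors the rank technique the paper itself uses in its earlier "simple example" section.
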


\begin{proof}[Proof (stating the problem)]

\bigskip

\noindent
(1) \quad Path from $a$ to $a$:  \quad $r^{k,n}_{10,4}r^{n,k}_{4,1}=1^{k,k}$

\bigskip

\noindent
(2) \quad Path from $b$ to $b$:  \quad  $r^{k,n}_{9,4}r^{n,k}_{4,2}=1^{k,k}$

\bigskip

\noindent
(3) \quad Path from $c$ to $c_l$: \quad $r^{k,n}_{8,4}r^{n,k}_{4,3}=1^{k,k}$

\bigskip

\noindent
(4) \quad Paths from $c$ to $c_r$: \quad $r^{k,n}_{11,7}r^{n,k}_{7,3}+r^{k,n}_{11,6}r^{n,k}_{6,3}=1^{k,k}$

\bigskip

\noindent
(5) \quad Paths from $a$ to $b$: \quad $r^{k,n}_{9,6}r^{n,k}_{6,1}+r^{k,n}_{9,4}r^{n,k}_{4,1}=0^{k,k}$

\bigskip

\noindent
(6) \quad Paths from $a$ to $c_l$: \quad $r^{k,n}_{8,5}r^{n,k}_{5,1}+r^{k,n}_{8,4}r^{n,k}_{4,1}=0^{k,k}$

\bigskip

\noindent
(7) \quad Paths from $a$ to $c_r$:  \quad $r^{k,n}_{11,6}r^{n,k}_{6,1}+r^{k,n}_{11,5}r^{n,k}_{5,1}=0^{k,k}$

\bigskip

\noindent
(8) \quad Paths from $b$ to $a$: \quad $r^{k,n}_{10,7}r^{n,k}_{7,2}+r^{k,n}_{10,4}r^{n,k}_{4,2}=0^{k,k}$

\bigskip

\noindent
(9) \quad Paths from $b$ to $c_l$: \quad $r^{k,n}_{8,5}r^{n,k}_{5,2}+r^{k,n}_{8,4}r^{n,k}_{4,2}=0^{k,k}$

\bigskip

\noindent
(10) \quad Paths from $b$ to $c_r$: \quad $r^{k,n}_{11,7}r^{n,k}_{7,2}+r^{k,n}_{11,5}r^{n,k}_{5,2}=0^{k,k}$

\bigskip

\noindent
(11) \quad Paths from $c$ to $a$: \quad $r^{k,n}_{10,7}r^{n,k}_{7,3}+r^{k,n}_{10,4}r^{k,n}_{4,3}=0^{k,k}$

\bigskip

\noindent
(12) \quad Paths from $c$ to $b$: \quad $r^{k,n}_{9,6}r^{n,k}_{6,3}+r^{k,n}_{9,4}r^{n,k}_{4,3}=0^{k,k}$

\end{proof}

\begin{proof}[Proof (lower bound)]  For the lower bound we can get a solution for $n=4$ and $k=3$. Like in the solution in the previous section we construct the solution by "timesharing" where $n$ represents $4$ time slots, and each message is split into $3$ parts. Like in the previous case by extensive computer checked calculations we found the following matrix solution that shows the capacity of the communication network over rings with $1+1=0$ is at least $\frac{3}{4}$:

\[r^{4,3}_{4,1}= \begin{bmatrix} 1 & 0 & 0  \\ 0 & 1 & 0  \\ 0 & 0 & 1 \\ 0 & 0 & 0  \end{bmatrix}, \quad r^{4,3}_{5,1} =  \begin{bmatrix} 1 &  0 & 0 \\ 0 & 0 & 0 \\ 0 & 0 & 1 \\ 0 & 0 & 0  \end{bmatrix}, \quad
r^{4,3}_{6,1}= \begin{bmatrix} 1 & 0 & 0 \\ 0 & 1 & 0 \\ 0 & 0 & 1 \\ 0 & 0 & 0 \end{bmatrix}, \quad r^{4,3}_{4,2}= \begin{bmatrix} 1 &  0 & 0 \\ 0 & 1 & 0 \\ 0 & 0 & 0 \\ 0 & 0 & 1  \end{bmatrix}, \quad
r^{4,3}_{5,2}=\begin{bmatrix} 1 & 0 & 0 \\ 0 & 1 & 0 \\ 0 & 0 & 0 \\ 0 & 0 & 1 \end{bmatrix}\]

\[r^{4,3}_{7,2}=\begin{bmatrix} 1 &  0 & 0 \\ 0 & 1 & 0 \\ 0 & 0 & 0 \\ 0 & 0 & 1  \end{bmatrix}, 
\quad r^{4,3}_{4,3}=\begin{bmatrix} 1 &  0 & 0 \\ 1& 0 & 0 \\ 0 & 1 & 0 \\ 0 & 0 & 1  \end{bmatrix}, \quad
r^{4,3}_{6,3}=\begin{bmatrix} 1 &  0 & 0 \\ 1 & 0 & 0 \\ 0 & 1 & 0 \\ 0 & 0 & 1 \end{bmatrix}, 
\quad  r^{4,3}_{7,3}=\begin{bmatrix} 1 &  0 & 0 \\ 1 & 0 & 0 \\ 0 & 1 & 0 \\ 0 & 0 & 1  \end{bmatrix} \]

\[r^{3,4}_{8,4}=\begin{bmatrix} 1 &  0 & 0 & 0  \\ 0 & 0 & 1 & 0 \\ 0 & 0 & 0 & 1  \end{bmatrix}, 
\quad r^{3,4}_{10,4}=\begin{bmatrix} 1 &  0 & 0 & 0  \\ 0 & 1 & 0 & 0 \\ 0 & 0 & 1 & 0  \end{bmatrix}, 
\quad r^{3,4}_{8,5}=\begin{bmatrix} 1 &  0 & 0 & 0 \\ 0 & 0 & 1 & 0 \\ 0 & 0 & 0 & 1  \end{bmatrix}, \] 

\[r^{3,4}_{11,5}= \begin{bmatrix} 0 &  1 & 0 & 0 \\ 0 & 0 & 0 & 0 \\ 0 & 0 & 0 & 1  \end{bmatrix}  \quad
r^{3,4}_{9,4}=\begin{bmatrix} 1 &  0 & 0 & 0 \\ 0 & 1 & 0 & 0 \\ 0 & 0 & 0 & 1  \end{bmatrix}, \quad 
r^{3,4}_{9,6}=\begin{bmatrix} 1 &  0 & 0 & 0 \\ 0 & 1 & 0 & 0 \\ 0 & 0 & 0 & 1  \end{bmatrix},\]
\[r^{3,4}_{11,6}=\begin{bmatrix} 0 &  0 & 0 & 0 \\ 0 & 0 & 0 & 0 \\ 0 & 0 & 0 & 0  \end{bmatrix} \quad
r^{3,4}_{10,7}=\begin{bmatrix} 1 &  0 & 0 & 0 \\ 0 & 1 & 0 & 0 \\ 0 & 0 & 1 & 0  \end{bmatrix}  \quad
r^{3,4}_{11,7}=\begin{bmatrix} 0 &  1 & 0 & 0 \\ 0 & 0 & 1 & 0 \\ 0 & 0 & 0 & 1  \end{bmatrix} \]
 
\end{proof}

\begin{proof}[Proof (upper bound)] The upper bound of $1$ follows from the fact that each matrix ring $M_{k}(F)$ is Dedekind finite with $1^{k,k}+1^{k,k}= 0^{k,k}$. Applying Theorem \ref{th5} to this fact shows that the network has no digital solution for $k=n$. 
\end{proof}

\section{Open problems and Conclusion}
\subsection{A few specific questions}
In the simple example in section \ref{simple} we were able to provide a matching lower and upper bound. For the two main cases we considered (digital versus analogue, and analogue versus digital) there is a gap between the lower and upper bound. This naturally leads to the following questions and conjectures

\begin{open}
Determine the analogue capacity of the communication network in figure \ref{figure6}. 
\end{open}
 
 \begin{open}
Determine the digital capacity of the communication network in figure \ref{analogue}
\end{open}

\begin{conjecture}\label{conj1}
The analogue capacity of the network in figure \ref{figure6} is strictly less than $1$.
\end{conjecture}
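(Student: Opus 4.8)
Here \emph{analogue capacity} means $\mathrm{capacity}(N,F)$ for a field $F$ with $1+1\neq 0$; fix such an $F$ and suppose a solution of figure~\ref{figure6} achieves rate $\tfrac{k}{n}$. By definition it is a family of $F$-matrices satisfying the twelve equations (1)--(12) of the capacity proposition for figure~\ref{figure6}, with $n\times k$ matrices on sender edges, $k\times n$ on receiver edges, $n\times n$ on inner edges, and $1^{k,k}$ on the right-hand sides. Equation~(1) already forces $k\le n$, so put $d:=n-k\ge 0$; the conjecture is that $\tfrac{k}{n}$ cannot approach $1$, i.e.\ that $d$ cannot be made negligible compared with $n$. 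The plan is to prove the quantitative statement $\mathrm{capacity}(N,F)\le 1-\varepsilon$ for an explicit $\varepsilon>0$ by \emph{quantifying the proof of Theorem~\ref{th4}}.

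First, pad every matrix up to size $n\times n$ by appending zero rows and/or zero columns; then all twelve equations become identities in the Dedekind finite ring $M_n(F)$, each former $1^{k,k}$ being replaced by the fixed rank-$k$ idempotent $P:=\mathrm{diag}(I_k,0)$, so that $I_n-P$ has rank $d$. Now re-run the derivation (13)--(37) of Theorem~\ref{th4} line by line under the following dictionary. Whenever the scalar proof infers from an equation $uv=1$ that $u,v$ have two-sided inverses, the padded equation reads $uv=P$, which only gives $\mathrm{rank}(u),\mathrm{rank}(v)\ge k$ together with generalized inverses $u^{+},v^{+}$ (these exist over any field) for which $u^{+}u$, $v v^{+}$, etc.\ are idempotents differing from an identity by a matrix of rank $\le d$; likewise an inner-edge matrix such as $r_{4,8}$ occurring in (4) has corank $\le d$ rather than being invertible. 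Whenever the scalar proof substitutes an inverse, carry the resulting error term explicitly and bound its rank using $\mathrm{rank}(AB)\le\min\{\mathrm{rank}(A),\mathrm{rank}(B)\}$, $\mathrm{rank}(A+B)\le\mathrm{rank}(A)+\mathrm{rank}(B)$, and the Sylvester inequality $\mathrm{rank}(AB)\ge\mathrm{rank}(A)+\mathrm{rank}(B)-m$ for $m\times m$ products. Since the derivation has a bounded number of lines, each introducing a bounded number of error matrices, the accumulated error rank stays at most $C\,d$ for an absolute constant $C$ readable off the $\sim 35$ displayed steps, and the analogue of step~(36) becomes
\[ 2\bigl(r^{k,n}_{6,10}\,r^{n,k}_{3,6}\bigr)=E,\qquad \mathrm{rank}(E)\le C\,d . \]
A separate short computation from equations (4), (6), (8), (11) --- the matrix versions of steps (14)--(16), (18) of Theorem~\ref{th4}, with every ``is invertible'' weakened to ``has corank $\le d$'' followed by one use of Sylvester's inequality --- gives $\mathrm{rank}\bigl(r^{k,n}_{6,10}\,r^{n,k}_{3,6}\bigr)\ge k-C'd$ for another absolute constant $C'$.

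Finally the characteristic enters: since $1+1$ is invertible in $F$, the displayed identity yields $r^{k,n}_{6,10}r^{n,k}_{3,6}=\tfrac12 E$ and hence $\mathrm{rank}\bigl(r^{k,n}_{6,10}r^{n,k}_{3,6}\bigr)\le Cd$. Combining the two rank estimates, $k-C'd\le Cd$, i.e.\ $k\le(C+C')(n-k)$, i.e.\ $\tfrac{k}{n}\le\frac{C+C'}{\,1+C+C'\,}<1$. Taking the supremum over all solutions gives $\mathrm{capacity}(N,F)\le\frac{C+C'}{1+C+C'}<1$, which is Conjecture~\ref{conj1}; since $C$ and $C'$ are comfortably larger than $3$, this is consistent with the lower bound $\tfrac34$ already established.

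The main obstacle is the middle step: the scalar proof of Theorem~\ref{th4} uses two-sided inverses freely, whereas at rate $\tfrac{k}{n}<1$ only one-sided generalized inverses of the full-rank $n\times k$ and $k\times n$ blocks (and of the corank-$\le d$ inner blocks) are available, so one must verify that \emph{every} substitution in the chain (24)--(36) still closes with an error term of rank $O(d)$ --- in particular that the errors accumulate additively rather than multiplicatively. A convenient device is to keep each error term in the factored shape $(\ast)(I_n-P)$ or $(I_n-P)(\ast)$ (``supported on the last $d$ coordinates'') for as long as possible, so that ranks genuinely add. A secondary, optional refinement would be to sharpen the constants enough to obtain a clean bound such as $\mathrm{capacity}(N,F)\le\tfrac{n-1}{n}$ for an explicit small $n$; but the strict inequality asserted by Conjecture~\ref{conj1} needs only the crude constant.
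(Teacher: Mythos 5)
First, a point of orientation: the statement you are proving is Conjecture~\ref{conj1}, which the paper explicitly leaves \emph{open}. The paper proves only the two-sided bound $\tfrac34\le\mathrm{capacity}(N,F)\le 1$ (the upper bound of $1$ coming from Theorem~\ref{th4} applied to $k=n$), so there is no proof of record to compare yours against; you are attempting something the author did not claim to do. Your overall strategy --- quantify the derivation of Theorem~\ref{th4} by replacing ``two-sided inverse'' with ``corank $\le d$'' where $d=n-k$, track error ranks additively via $\mathrm{rank}(A+B)\le\mathrm{rank}(A)+\mathrm{rank}(B)$ and Sylvester's inequality, and close with the characteristic-$\ne 2$ observation that $\mathrm{rank}(2X)=\mathrm{rank}(X)$ --- is the right general shape, and the endgame is sound: equations (11) and (16) do force $r^{n,k}_{3,6}$ and $r^{k,n}_{6,10}$ to have rank $k$, so Sylvester gives $\mathrm{rank}(r^{k,n}_{6,10}r^{n,k}_{3,6})\ge 2k-n=k-d$ unconditionally, and a bound $\mathrm{rank}\bigl(2\,r^{k,n}_{6,10}r^{n,k}_{3,6}\bigr)\le Cd$ would indeed yield $\tfrac{k}{n}\le\tfrac{C+1}{C+2}<1$.

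The genuine gap is the middle of the argument, and you have named it yourself without closing it: the claim that every one of the roughly twenty-five substitution steps (13)--(36) survives with an additive error of rank $O(d)$ is asserted, not verified, and it is precisely where the proof could fail. Two concrete danger points. First, several scalar steps are of the form ``solve for $X$'': e.g.\ step (26) computes $r_{8,11}=\bigl(-r_{4,8}^{-1}r_{8,10}^{-1}r_{6,10}r_{6,9}^{-1}\bigr)^{-1}r_{4,8}^{-1}$, i.e.\ it \emph{inverts an expression that is itself only determined up to a rank-$O(d)$ perturbation}. An $n\times k$ matrix known only to have rank $\ge k-Cd$ need not have a left inverse at all, so these steps do not transfer by your dictionary; you must instead reorganize the derivation so that generalized inverses are applied only to the original edge matrices (whose exact one-sided inverses are guaranteed by equations (1), (4), (11), (16)) and never to derived expressions. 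Second, the cancellations $r\,r^{-1}=1$ that the scalar proof uses freely (e.g.\ $r_{4,8}r_{4,8}^{-1}$ in (26), $r_{7,2}r^{-1}_{7,2}$-type cancellations in (31)) each become $I_n$ minus an idempotent of rank up to $d$ \emph{sitting in the middle of a product}, and you must check that each such interior defect really contributes only $O(d)$ to the rank of the final expression rather than destroying the exact cancellation $r_{6,10}r_{3,6}+r_{6,10}r_{3,6}$ that produces the factor $2$ in step (36); if that cancellation only holds up to rank $Cd$ with $C$ too large relative to the structure of the network, the final inequality $k-d\le Cd$ is vacuous for the relevant range of $k/n$. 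Until this bookkeeping is actually carried out line by line (or the derivation is restructured as a single rank computation on a block matrix), you have a plausible program, not a proof, and the conjecture remains open.
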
 

\begin{conjecture}\label{conj2}
The digital capacity of the network in figure \ref{analogue} is strictly less than $1$.
\end{conjecture}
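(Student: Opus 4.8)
\medskip
\noindent
\emph{Towards Conjecture~\ref{conj2}.} By the definition of network capacity and the reduction to fields, it suffices to produce an $\epsilon>0$ such that for every pair $k\le n$ with $n-k<\epsilon n$ the matrix system $(1)$--$(12)$ associated with figure~\ref{analogue} has no solution over any field $F$ with $\mathrm{char}(F)=2$; this forces every feasible ratio to satisfy $k/n\le 1-\epsilon$. The plan is to replay, in this rectangular setting, the elimination carried out in the proof of Theorem~\ref{th5}, while keeping quantitative control of the ``defect'' caused by the $(n-k)$-dimensional slack in each internal edge.

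First I would record the rank information forced by the ``identity'' equations. From $(1)$, $(2)$, $(3)$, together with $\mathrm{rank}(XY)\ge\mathrm{rank}(X)+\mathrm{rank}(Y)-(\text{inner dimension})$, the matrices $r_{4,1},r_{4,2},r_{4,3}$ are injective ($F^k\hookrightarrow F^n$) and $r_{10,4},r_{9,4},r_{8,4}$ are surjective ($F^n\twoheadrightarrow F^k$). Feeding these into the interference equations $(5),(6),(8),(9),(11),(12)$ then forces every remaining internal-edge matrix to have rank at least $2k-n$, i.e.\ corank at most $n-k$. Next, choose complements $F^n=\mathrm{im}(r_{4,i})\oplus U_i$ with $\dim U_i=n-k$ and fix a one-sided inverse for each of the above matrices. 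Now replay the substitutions of steps $(13)$--$(35)$ of the Theorem~\ref{th5} proof, replacing each two-sided inverse by the chosen one-sided inverse and each cancelled identity by an identity-plus-error whose error factors through one of the $U_i$. If the bookkeeping closes, the endpoint is a relation $I_k=2Q+E$ with $Q$ some $k\times k$ matrix and $\mathrm{rank}(E)\le c\,(n-k)$ for a constant $c$ depending only on the length of the elimination chain. In characteristic $2$ the term $2Q$ vanishes, so $I_k=E$; hence $k=\mathrm{rank}(E)\le c\,(n-k)$, i.e.\ $k/n\le c/(1+c)$, giving the conjecture with $\epsilon=1/(1+c)$.

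The hard part is the second step. In the square case the elimination in Theorem~\ref{th5} used that \emph{all} of $r_{6,1},r_{9,6},r_{5,1},r_{8,5},r_{7,2},r_{10,7},r_{6,3}$ are genuinely invertible, each such invertibility being derived in turn from an interference equation via Dedekind finiteness and invertibilities established earlier; in the rectangular case one only knows these matrices have corank at most $n-k$, their one-sided inverses are non-canonical, and equations $(5)$--$(12)$ constrain only \emph{sums} of two path products, not the individual summands. Consequently the error matrices introduced at successive stages can interact badly --- in particular an ``identity plus low-rank error'' need not be invertible --- and a crude estimate can let $\mathrm{rank}(E)$ grow faster than linearly in $n-k$, which would not close the argument. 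Obtaining a genuinely linear bound is the step that needs a new idea; equivalent reformulations are to show that a solution of rate close to $1$ supplies a ``near-representation'' over $\mathrm{GF}(2)$ of the rank-$3$ (non-Fano-type) linear configuration that this network encodes, together with a stability result excluding such near-representations, or to prove an information-theoretic (linear-programming / rank-inequality) converse for the network that yields a constant strictly below $1$. No such argument is presently known, which is why the statement is only conjectured.
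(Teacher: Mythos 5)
The statement you are addressing is stated in the paper only as a conjecture: the paper offers no proof of it, so there is nothing on the paper's side to compare your argument against. What the paper does supply is the qualitative antecedent (Theorem~\ref{th5}): for $k=n$ the network is unsolvable over any Dedekind finite ring in which $1+1$ is not invertible, which yields only the upper bound of $1$ on the digital capacity, not the strict inequality. Your proposal correctly identifies that the natural attack is a quantitative, rectangular-matrix version of the elimination in the proof of Theorem~\ref{th5}, with Sylvester-type rank bookkeeping replacing the repeated appeals to Dedekind finiteness, aiming at a relation of the form $I_k = 2Q + E$ with $\mathrm{rank}(E) = O(n-k)$, which in characteristic $2$ would give $k \le c\,(n-k)$ and hence a capacity bound of $c/(1+c) < 1$.

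However, your proposal does not constitute a proof, and you say so yourself. The gap is exactly where you locate it: the square-case argument leans on genuine two-sided invertibility of $r_{6,1}, r_{9,6}, r_{5,1}, r_{8,5}, r_{7,2}, r_{10,7}, r_{6,3}$, each derived in sequence from an interference equation plus previously established invertibilities; in the rectangular case you only get lower bounds on ranks (hence non-canonical one-sided inverses with nontrivial kernels or cokernels), the equations $(5)$--$(12)$ constrain only sums of two path products rather than individual factors, and nothing in the sketch controls how the error terms introduced at each of the roughly twenty substitution steps compose. Without a lemma showing the accumulated error $E$ has rank bounded \emph{linearly} in $n-k$ with an explicit constant, the chain $I_k = E$, $k \le \mathrm{rank}(E)$ gives nothing. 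Note also a smaller issue in your first step: from $(1)$--$(3)$ you get that $r_{9,4}r_{4,1}$ has rank at least $2k-n$ by Sylvester, not rank $k$, so even the initial corank bounds on the remaining internal edges are of size $n-k$ per application and will themselves compound. The statement therefore remains a conjecture; what you have written is a reasonable research plan together with an accurate diagnosis of why it does not yet close.
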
 

\begin{open}\label{ratio1}
Considering all communication problems $N$ where digital communication outperforms analogue communication, what is the maximal ratio  $\frac{{\rm digital \ capacity}}{{\rm analogue \ capacity}}$ 
\end{open}

\begin{open} \label{ratio2}
Considering all communication problems $N$ where analogue communication outperforms digital communication, what is the maximal ratio  $\frac{{\rm analogue \ capacity}}{{\rm digital \ capacity}}$ 
\end{open}

\begin{conjecture}
The maximal ratio in \ref{ratio1} is strictly larger than $1$
\end{conjecture}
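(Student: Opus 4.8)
The plan is to reduce the conjecture to a single explicit network. Open question~\ref{ratio1} asks for the supremum of $\tfrac{\text{digital capacity}}{\text{analogue capacity}}$ over all networks on which digital outperforms analogue; since the supremum of a nonempty set of reals, each of which exceeds $1$, is itself $>1$, it suffices to produce one network $N$ with finite ratio strictly above $1$. The natural candidate is the network $N$ of figure~\ref{figure6}, for which Theorem~\ref{th4} already gives a qualitative separation and for which the analogue capacity is known to lie in $[\tfrac34,1]$, so the ratio (if $>1$) is automatically finite. Thus everything comes down to the strict inequality $\mathrm{capacity}(N,\text{analogue})<1$, which is exactly Conjecture~\ref{conj1}, together with the easy fact $\mathrm{capacity}(N,\text{digital})=1$.

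First I would settle the digital side. Over any field $F$ with $\mathrm{char}(F)=2$ the assignment $r_{ij}=1^{1,1}$ solves $(1)$--$(12)$ with $k=n=1$, exactly as in the first half of the proof of Theorem~\ref{th4}; hence $\mathrm{capacity}(N,F)\ge1$. Conversely, equation $(1)$ reads $r^{k,n}_{6,9}r^{n,k}_{1,6}=1^{k,k}$, whose left-hand side has rank at most $\min(k,n)$ while the right-hand side has rank $k$; so every solution forces $k\le n$, giving $\mathrm{capacity}(N,F)\le1$. Hence the digital capacity of $N$ equals $1$.

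The heart of the matter is the strict analogue bound $\mathrm{capacity}(N,F)<1$ when $\mathrm{char}(F)\ne2$, and the route I would take is a \emph{fractional} version of the derivation $(13)$--$(37)$. Suppose $N$ has an analogue solution of capacity $k/n$ over $M_n(F)$. In the case $k=n$ that derivation produced $0=2$ by a bounded-length chain of left/right cancellations of elements shown to have two-sided inverses (steps $(13)$--$(22)$), followed by substitutions ending in $2(r_{6,10}r_{3,6})=0$ with $r_{6,10}$, $r_{3,6}$ invertible. When $k<n$ the sender/receiver blocks $r^{k,n}$, $r^{n,k}$ that appear with an identity on one side (equations $(1),(4),(11),(6)$) still have full rank $k$, but the interior blocks $r^{n,n}_{4,8}$, $r^{n,n}_{5,8}$ may fail to be invertible; the idea is to replace each ``invert $r$'' step by a choice of generalized inverse on a subspace where $r$ is invertible and to bound the dimension lost. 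Carried through, one should obtain that a matrix forced to be invertible in the $k=n$ argument now has kernel of dimension at most $C\,(n-k)$, where $C$ depends only on the (fixed) length of the chain; since $2I_n$ is invertible over $F$, the terminal relation $2M=0$ with $\mathrm{rank}(M)\ge k-C(n-k)$ then yields $k\le C(n-k)$, i.e.\ $k/n\le\tfrac{C}{C+1}<1$. Making this rank bookkeeping rigorous — choosing compatible splittings of the domains and codomains at each substitution and controlling the accumulated defect — is the main obstacle, and is precisely the quantitative content of Conjecture~\ref{conj1}.

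Combining the two parts, for $N=$ figure~\ref{figure6} one gets $\tfrac{\mathrm{capacity}(N,\text{digital})}{\mathrm{capacity}(N,\text{analogue})}=\tfrac1c$ with $c\le\tfrac{C}{C+1}<1$, so this ratio, and hence the maximal ratio in Open question~\ref{ratio1}, is strictly larger than $1$. If the rank-tracking argument for this particular $N$ proves intractable, a fallback is to build a tailored network — for instance by cascading copies of the gadget of figure~\ref{figure6}, or via the commutative polynomial-method constructions of~\cite{poly} — designed so that a clean cut-set computation caps the analogue capacity strictly below $1$ while an explicit digital timesharing scheme, in the spirit of the $\tfrac34$ lower bound, still attains a strictly larger rate.
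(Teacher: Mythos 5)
This statement is left as a conjecture in the paper; the only thing the paper says about it is the one-line remark that Conjecture~\ref{conj1} implies it. Your proposal is, at bottom, the same reduction: you correctly observe that it suffices to exhibit one network with digital capacity strictly exceeding analogue capacity, you pick the network of figure~\ref{figure6}, you settle the digital side (the all-ones solution over characteristic $2$ gives capacity $\ge 1$, and the rank argument on equation $(1)$ gives $\le 1$ --- this part is fine and slightly more than the paper spells out), and you then identify the remaining step as exactly Conjecture~\ref{conj1}. So the logical skeleton matches the paper's own observation, but neither you nor the paper has a proof.

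The genuine gap is the claimed ``fractional'' version of steps $(13)$--$(37)$. The assertion that each block forced to be invertible in the $k=n$ argument has kernel of dimension at most $C(n-k)$ is not established, and it is not a routine bookkeeping exercise: the derivation repeatedly multiplies by inverses on both sides and substitutes earlier identities into later ones, so rank defects do not simply add --- a product $r_{4,7}r_{4,8}^{-1}r_{8,10}^{-1}r_{6,10}$ where each factor is only invertible on a corank-$(n-k)$ subspace need not be close to invertible on any controlled subspace, because the relevant subspaces for consecutive factors need not be compatible. Moreover the terminal relation you want, $2M=0$ with $\operatorname{rank}(M)\ge k-C(n-k)$, is exactly the quantitative content of Conjecture~\ref{conj1}, which you acknowledge is ``the main obstacle.'' An argument that names its own missing step is not a proof; as it stands your proposal proves only the conditional statement ``Conjecture~\ref{conj1} implies the present conjecture,'' which is what the paper already records. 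The fallback suggestions (cascading gadgets, the constructions of \cite{poly}) are likewise unexecuted, and the paper itself cautions that the methods of \cite{poly} only apply readily to commutative rings.
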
 

\begin{conjecture}
The maximal ratio in \ref{ratio2} is strictly larger than $1$
\end{conjecture}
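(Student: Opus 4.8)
The plan is to reduce the conjecture to exhibiting a \emph{single} network with a provable strict capacity gap. If analogue communication outperforms digital communication on a network $N$ in the sense of Open question~\ref{ratio2}, then by definition $\mathrm{capacity}(N,\text{analogue}) > \mathrm{capacity}(N,\text{digital})$, so the ratio for that $N$ already exceeds $1$ and the supremum over all such $N$ is $>1$. Hence it suffices to produce one network $N$ with $\mathrm{capacity}(N,F) > \mathrm{capacity}(N,F')$ for some field $F$ of characteristic $\neq 2$ and some field $F'$ of characteristic $2$; the natural candidate is the analogue-favouring network of Figure~\ref{analogue} analysed in Theorem~\ref{th5}. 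Note that the corollary to Theorem~\ref{th5} only yields a gap in \emph{scalar} solvability, not yet a gap in capacity, so this reduction still leaves genuine work to do.

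First I would pin the analogue capacity of the network $N$ of Figure~\ref{analogue} to exactly $1$: a scalar solution exists over any field with $1+1$ invertible by Theorem~\ref{th5}, giving $\mathrm{capacity}(N,F)\ge 1$, while equation~(1) of the capacity formulation (``source feeds its own receiver along a single processed path'') has the shape $r^{k,n}r^{n,k}=1^{k,k}$, and $\mathrm{rank}(r^{n,k})\le\min(n,k)$ forces $k\le n$, so $\mathrm{capacity}(N,F)\le 1$. Thus everything comes down to proving Conjecture~\ref{conj2}, i.e. $\mathrm{capacity}(N,F)<1$ when $\mathrm{char}(F)=2$.

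For Conjecture~\ref{conj2} I would rerun the scalar derivation (13)--(35) inside the proof of Theorem~\ref{th5}, but over rectangular matrices with entries in a field of characteristic $2$, replacing each inversion by a rank inequality. Each identity $XY=1^{k,k}$ forces $\mathrm{rank}(X)=\mathrm{rank}(Y)=k$; each step that in the scalar proof ``solved for a variable'' by multiplying by an inverse now composes with a one-sided near-inverse and degrades rank by at most $n-k$; and, pushing the whole chain through, the terminal scalar identity $1=2\bigl(r_{11,7}r_{7,3}+r_{11,6}r_{6,3}-r_{11,5}r_{8,5}^{-1}r_{8,4}r_{4,3}\bigr)$ turns into a matrix identity $1^{k,k}=2X+E$ with $\mathrm{rank}(E)=O(n-k)$. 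Since $2X=0^{k,k}$ in characteristic $2$, we get $k=\mathrm{rank}(1^{k,k})=\mathrm{rank}(E)=O(n-k)$, so $k\le c(n-k)$ for an absolute constant $c$, whence $k/n\le c/(c+1)<1$; this gives $\mathrm{capacity}(N,F)<1$, hence Conjecture~\ref{conj2} and the present conjecture, with an explicit ratio $\mathrm{analogue}/\mathrm{digital}>1$.

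The hard part will be the rank bookkeeping: the scalar argument is a long chain of substitutions, and one must order them so the total rank defect stays linear in $n-k$ with a \emph{small} constant. Extracting \emph{some} $\delta>0$ with digital capacity $\le 1-\delta$ should suffice for the conjecture and looks feasible; extracting the \emph{sharp} constant --- which one would hope matches the $\tfrac34$ lower bound already constructed for $N$, pinning its digital capacity to $\tfrac34$ and the ratio to exactly $\tfrac43$ --- is the delicate point, and is presumably why Conjecture~\ref{conj2} is left open. If the direct approach proves stubborn, a fallback is to build a more digital-hostile network by composing the digital-obstructing sub-gadget of Figure~\ref{analogue} with itself, trading a cleaner provable separation for a smaller ratio; along either route one should record that every $k\times k$ block lies in the Artinian, hence Dedekind finite, ring $M_k(F)$, so Theorem~\ref{equiv} governs the square pieces while only elementary rank facts are needed for the rectangular ones.
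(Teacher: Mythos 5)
This statement is one of the paper's explicitly \emph{open} conjectures: the paper offers no proof, only the observation that Conjecture~\ref{conj2} would imply it. Your reduction reproduces exactly that observation (one witness network with a strict capacity gap suffices, and the analogue capacity of the network in Theorem~\ref{th5} is pinned to $1$ by the scalar solution plus the rank bound $k\le n$ from equation~(1)). That part is fine. But everything then rests on your proposed proof of Conjecture~\ref{conj2}, and that proof is not actually carried out: the entire content is the assertion that the terminal identity becomes $1^{k,k}=2X+E$ with $\mathrm{rank}(E)=O(n-k)$, which is stated by analogy with the scalar derivation rather than derived.

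The concrete obstruction is this. The scalar argument in Theorem~\ref{th5} repeatedly invokes Dedekind finiteness to upgrade one-sided inverses to two-sided inverses (steps (13)--(23)) and then cancels exactly, e.g.\ $r_{6,1}^{-1}r_{6,1}=1$ in step~(24). For rectangular blocks with $k<n$, a relation $r^{k,n}r^{n,k}=1^{k,k}$ gives only a one-sided inverse; the best available substitute for $r^{n,k}\bigl(r^{k,n}r^{n,k}\bigr)^{-1}r^{k,n}$ is an idempotent of rank $k$, so each ``cancellation'' leaves behind an error $1^{n,n}-P$ of rank $n-k$ that is then multiplied by other matrices and fed into roughly twenty subsequent substitutions. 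Whether these propagated errors accumulate to something of rank linearly bounded in $n-k$ --- rather than interacting with, or being absorbed into, the main term $2X$ --- is precisely the open content of Conjecture~\ref{conj2}; your proposal names this as ``the hard part'' but does not resolve it, and the ordering of substitutions that would keep the defect controlled is not exhibited. As written, the argument establishes nothing beyond what the paper already states, namely that the conjecture would follow from Conjecture~\ref{conj2}.
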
 

Notice that Conjecture \ref{conj1} implies Conjecture \ref{ratio1}, and that Conjecture \ref{conj2} implies Conjecture \ref{ratio2}.

\subsection{More open ended questions} 

Are the digital vs analogue (analogue vs digital) network in constructed in the paper the simplest possible?  

For any pair of (Dedekind finite) rings $R_1$ and $R_2$ we might ask if there exist a communication problem that is solvable over $R_1$, but unsolvable over $R_2$. And vise versa.  In certain questions of this type the general approach developed in \cite{poly} might be useful though a lot of detains will need to be checked. 

More specifically can for example the $n$th Weyl algebra be separated from the $(n+1)$th -Weyl algebra? Does there exists a communication problem that not is solvable over any commutative ring, but is solvable over a given non-commutative ring (e.g. the 1st Weyl algebra)?  Does there exist communication problems that has a solution over some Dedekind finite ring, but has no solution over any finite ring? F

Does the class of stably finite rings precisely capture the condition that bottlenecks are well behaved? Or is it possible to impose natural requirements that impose stronger restrictions than the ring being stably finite?

One general class of questions concerns {\em quantum communication}, where we consider for example a module consisting of a ring of operators acting on a Hilbert space.  It would be interesting to develop a general theory of quantum network communication based on our approach.  Does there exist communication problems that cannot be solved classically, but has quantum mechanical solutions? 
 
We defined the capacity  of a communication networks  over finitely dimension matrix rings ${\rm GL}_{n}(F)$. Can this definition be extended to a larger class of Rings? 

Finally, it would be interesting to expand the theory we developed to accommodate networks with feedback loops in the style of (non-commutative) control theory. 
 
\subsection{Conclusion}
In the paper, we developed linear network coding over rings and modules. We showed that information bottlenecks are somewhat well behaved over Dedekind finite rings, and even more so over stably finite rings.  Network coding over rings and modules seems to be a fertile area with many changeling questions that combine algebra, with problems of a graph theoretical and combinatorial flavour.

\newpage

\bibliographystyle{plain}

\bibliography{bibio}

\end{document}